\newtheorem{theorem}{Theorem}
\newtheorem{claim}{Claim}
\newtheorem{corollary}{Corollary}
\newtheorem{definition}{Definition}
\newtheorem{example}{Example}
\newtheorem{lemma}{Lemma}
\newtheorem{proposition}{Proposition}
\newtheorem{remark}{Remark}
\subjclass[2000]{Primary 37F75, 57R30; Secondary 32M25, 32S65.}
\date{}
\keywords{Singular holomorphic
foliation,  holonomy group, closed orbit.}
\begin{document}

\title[Closed Orbits and Integrability]{Closed Orbits and Integrability for singularities of complex vector fields in dimension three}
\author{L. C\^amara and B. Sc\'ardua}
\date{}
\maketitle

\begin{abstract}
This paper is about the integrability of complex vector fields in dimension three in a neighborhood of a singular point. More precisely, we study the existence of holomorphic first integrals for isolated singularities of holomorphic vector fields in complex dimension three, pursuing the discussion started in  \cite{CaSc2009}. Under generic conditions, we
 prove a topological criteria for the existence of a holomorphic first integral. Our result may be seen as a kind of Reeb stability result for the framework of vector fields singularities in complex dimension three. As a consequence, we prove that, for the class of singularities we consider,  the
existence of a holomorphic first integral is invariant under topological equivalence.

\end{abstract}

\section{Introduction: Integrability, first integrals and closed orbits}
The problem of deciding whether a vector field or, more generally, an
ordinary differential equation can be integrated by studying its number
of non-transcendent solutions goes back to H. Poincar\'e, Dulac (\cite{Dulac}) and
other authors\footnote{Corresponding author: Bruno Sc\'ardua.  Tel/fax number: + 55 21 35 02 50 72; e-mail address: scardua@im.ufrj.br}. More recently the classical theorem of G. Darboux (\cite{jouanolou}) states that
a polynomial vector field in the complex plane admits a rational first integral
provided that if, and only if, it admits infinitely many algebraic solutions.
Of course the class of analytic equations is the one where the above problem
makes more sense. Moreover, with the arrival of the Theory of foliations
the use of geometrical/topological methods has given an important contribution
to the comprehension of the problem as well as some important results. Indeed,
a holomorphic vector field $X$ defined in a neighborhood $U\subset \mathbb C^n, n \geq 2$ of the origin $0\in\mathbb C^n$, with an isolated singularity at the origin, defines a germ of a one-dimensional holomorphic foliation
with a singularity at the origin in a natural way. Conversely, any germ of a holomorphic foliation with a singularity at the origin is defined in a small enough open neighborhood of the origin by holomorphic vector field with an isolated singularity at the origin. This is a consequence of Hartogs' extension theorem (\cite{gunning1}).

The local framework is not less important than the global (algebraic) case.
In this sense we have the important theorem of Mattei-Moussu (\cite{MaMo1980}) that states that
a germ of a holomorphic vector field at the origin of $\mathbb C^2$
 admits a holomorphic   first integral if, and only if,  the
following conditions are satisfied. (i) the leaves are closed off the origin
and (ii) only finitely many of them are separatrices, i.e.,  adhere to the origin.
Condition (ii) is usually known as \textit{non-dicriticity} of the (germ of a)
foliation induced by the (germ of a) vector field (\cite{CaSa}). A foliation
germ admitting a pure meromorphic first integral is necessarily dicritical. An
example of Suzuki shows then that there is no such a topological criteria for
existence of a meromorphic first integral (\cite{Suzuki}, \cite{Klughertz}).
Also interesting is the point of view adopted in
 \cite{Alexander-Verjovsky} where the authors prove, for a germ of a holomorphic vector field singularity in dimension $n \geq 2$, the existence of a holomorphic first integral,
 under the hypothesis of existence of an uniform bound for the volume of the orbits of the vector field,
 and some additional condition that restricts the  ``dicritical case".

Our goal is to investigate topological conditions assuring the existence of
holomorphic first integrals for vector field germs in dimension $3$. This is done in Theorem~\ref{topological criterion}. In few words, our result shows, for a generic class of singularities,  an equivalence between the existence of a holomorphic first integral and the existence of a suitable stable separatrix, and also with the existence of a suitable {\it flag}, i.e., a codimension one foliation containing the orbits of the vector field. Our result may be seen as a kind of Reeb stability theorem for singularities of complex vector fields.

According
to the above, we shall only consider the holomorphic, i.e., non-dicritical
case. Let us then introduce the  notation we use, already used in
\cite{CaSc2009}.
Denote the ring of germs of holomorphic functions on $(\mathbb{C}^{n},0)$ by
$\mathcal{O}_{n}$ and its maximal ideal by $\mathcal{M}_{n}$. Given a germ of
a holomorphic vector field $X\in\mathfrak{X}(\mathbb{C}^{n},0)$ we shall
denote by $\mathcal{F}(X)$ the germ of a one-dimensional holomorphic foliation
on $(\mathbb{C}^{n},0)$ induced by $X$.

\begin{definition}
[holomorphic first integral]\label{def. first int.}{\rm We say that a germ
of a holomorphic foliation $\mathcal{F}(X)$ \textit{has a holomorphic first
integral}, if there is a germ of a holomorphic map $F\colon(\mathbb{C}%
^{n},0)\rightarrow(\mathbb{C}^{n-1},0)$ such that:}

\begin{description}
\item[(a)] $\mathrm{F}$ {\rm is a submersion almost everywhere, i.e., if we
write } $\mathrm{F=(f}_{\mathrm{1}}\mathrm{,\cdots,f}_{\mathrm{n-1}}%
\mathrm{)}$ {\rm in coordinate functions, then the $(n-1)$-form }
$\mathrm{df}_{\mathrm{1}}\mathrm{\wedge\cdots\wedge df}_{\mathrm{n-1}}$
{\rm is non-identically zero, equivalently, it has maximal rank except for
a proper analytic subset;}

\item[(b)] {\rm The leaves of $\mathcal{F}(X)$ are contained in level
curves of $F$.}
\end{description}

{\rm Further, a germ $f$ of a meromorphic function at the origin
$0\in\mathbb{C}^{n}$ is called $\mathcal{F}(X)$\textit{-invariant} if the
leaves of $\mathcal{F}(X)$ are contained in the level sets of $f$. This can be
precisely stated in terms of representatives for $\mathcal{F}(X)$ and $f$, but
can also be written as $i_{X}(df)=X(f)\equiv0$. }
\end{definition}

Next we pass to describe the class of vector field germs we shall work with. A
germ of  a holomorphic vector field $X$ on $(\mathbb{C}^{n},0)$ is \textit{non-degenerate} if
its linear part $DX(0)$ is non-singular. As a linear map, generically $DX(0)$
has three distinct eigenvalues, thus is diagonalizable and $X$ has an isolated
singularity at the origin. From Poincar\'e-Dulac, Siegel and Brjuno
linearization theorems and from \cite{CaKuiPa1978}, \textit{generically}
(i.e., for a full measure subset of the set of the set of germs of holomorphic
vector fields), up to a change of coordinates,  the vector field $X$ leaves invariant the \textit{coordinate
hyperplanes} $x_{1}\cdots x_{n}=0$. This motivates the following definition:

\begin{definition}[generic germs]
{\rm We shall say that $\mathcal{F}(X)$ is \textit{non-degenerate generic}
if $DX(0)$ is non-singular, diagonalizable and, after some suitable change of coordinates,  $X$ leaves invariant the
coordinate planes.}
\end{definition}

Denote the set of germs of non-degenerate generic vector fields on
$(\mathbb{C}^{n},0)$ by $\operatorname{Gen}(\mathfrak{X}(\mathbb{C}^{n},0))$.
Let $X\in\operatorname{Gen}(\mathfrak{X}(\mathbb{C}^{n},0))$, $S$ a smooth
integral curve of $\mathcal{F}(X)$ through the origin, and $f$ a germ of an
$\mathcal{F}(X)$-invariant meromorphic function. Then we denote by
$\operatorname*{Hol}(\mathcal{F}(X),S,\Sigma)$ the holonomy of $\mathcal{F}%
(X)$ with respect to $S$ evaluated at a section $\Sigma$ transverse to $S$,
with $\Sigma\cap S=\{q_{\Sigma}\}$ a single point. Notice that we can choose
$\Sigma$ to be biholomorphic to a disc in ${\mathbb{C}}^{n-1}$ with center
corresponding to $q_{\Sigma}$. With this identification the group
$\operatorname*{Hol}(\mathcal{F}(X),S,\Sigma)$ is conjugate to a subgroup of
the group $\operatorname*{Diff}({\mathbb{C}}^{n-1},0)$ of germs of complex
diffeomorphisms fixing the origin in ${\mathbb{C}}^{n-1}$. A germ $f$ of a
meromorphic function at the origin $0\in\mathbb{C}^{n}$ is called
$\mathcal{F}(X)$- \textit{adapted} to $(\mathcal{F}(X),S)$ if it can be
written locally in the form $f=g/h$ where $g,h\in\mathcal{O}_{n}$ are
relatively prime, $S\subset Z(g)\cap Z(h)$, where $Z(g)$ and $Z(h)$ denote the
zero sets of $g$ and $h$ respectively, and $\left.  f\right\vert _{_{\Sigma}}$
is pure meromorphic for a generic transverse section $\Sigma$ to $S$. Given
vector field germs $X,Y\in\operatorname{Gen}(\mathfrak{X}(\mathbb{C}^{3},0))$
we have $\mathcal{F}(X)=\mathcal{F}(Y)$ if and only if for some nonvanishing
holomorphic function germ $u$ we have $Y=uX$. We shall then say that $X$ and
$Y$ are \textit{tangent}. Any vector field germ $X\in\operatorname{Gen}%
(\mathfrak{X}(\mathbb{C}^{3},0))$ admitting a holomorphic first integral must
satisfy the following condition (cf.\cite{CaSc2009}):

\begin{definition}
[condition $(\star)$]\label{definition:conditionstar} {\rm Let
$X\in\operatorname{Gen}(\mathfrak{X}(\mathbb{C}^{3},0))$. We say that $X$
satisfies condition $(\star)$ if there is a real line $L\subset\mathbb{C}$
through the origin, containing all the eigenvalues of $X$ and such that not
all the eigenvalues belong to the same connected component of $L\setminus
\{0\}$. }
\end{definition}

There is therefore one \textit{isolated} eigenvalue of $X$. The above
condition holds for $X$ if and only if holds for any vector field $Y$ such
that $X$ and $Y$ are tangent. Condition $(\star)$ implies that $X$ is in the
Siegel domain, but is stronger than this last. Denote by $\lambda(X)$ the
isolated eigenvalue of $X$ and by $S_{X}$ its corresponding invariant manifold
(the existence is granted by the classical invariant manifold theorem). We
call $S_{X}$ the \textit{distinguished axis} of $X$.
We shall say that  $X$ is  \textit{transversely stable}
with respect to $S_{X}$ if for any representative $X_{U}$ of the germ $X$,
defined in an open neighborhood $U$ of the origin, any open section
$\Sigma\subset U$ transverse to $S_{X}$ with $\Sigma\cap S_{X}=\{q_{\Sigma}%
\}$, and any open set $q_{\Sigma}\in V\subset\Sigma$ there is an open subset
$q_{\Sigma}\in W\subset V$ such that all orbits of $X_{U}$ through $W$
intersect $\Sigma$ only in $V$.

In this paper we prove the following topological criterion for the integrability of a
germ of a complex vector field singularity in dimension three:

\begin{theorem}
\label{theorem:partialtopological criterion}Suppose that $X\in\operatorname*{Gen}%
(\mathfrak{X}(\mathbb{C}^{3},0))$ satisfies condition $(\star)$ and let
$S_{X}$ be the distinguished axis of $X$. Then
$\mathcal{F}({X})$ has a holomorphic first integral if, and only if, the leaves of $\mathcal{F}({X})$ are closed off the singular set
$\operatorname*{Sing}(\mathcal{F}({X}))$ and transversely stable with respect
to $S_{X}$.
\end{theorem}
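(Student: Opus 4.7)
The ``only if'' direction is soft. If $F=(f_1,f_2)\colon(\mathbb{C}^3,0)\to(\mathbb{C}^2,0)$ is a holomorphic first integral then every leaf of $\mathcal{F}(X)$ is an irreducible component of a fibre of $F$, hence closed off $\operatorname{Sing}(\mathcal{F}(X))$. For transverse stability, given an open $V\subset\Sigma$ containing $q_\Sigma$, the set $W=\{p\in V:F^{-1}(F(p))\cap\Sigma\subset V\}$ is an open neighbourhood of $q_\Sigma$ by continuity of $F$ on a compact representative of $\overline V$, and delivers the required stability.

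For the converse, I would fix a small transverse bidisc $\Sigma$ to $S_X$ at a non-singular point $q_\Sigma$, identified with a neighbourhood of $0\in\mathbb{C}^2$ sending $q_\Sigma$ to the origin. Since $S_X\setminus\{0\}$ is homotopy equivalent to a circle, the holonomy group $G=\operatorname{Hol}(\mathcal{F}(X),S_X,\Sigma)\subset\operatorname{Diff}(\mathbb{C}^2,0)$ is cyclic, generated by a single germ $h$ fixing $q_\Sigma$; its orbits on $\Sigma$ coincide with the intersections $L\cap\Sigma$ of leaves of $\mathcal{F}(X)$ with $\Sigma$. Because $X\in\operatorname{Gen}(\mathfrak{X}(\mathbb{C}^3,0))$, the two coordinate hyperplanes distinct from the one containing $S_X$ cut $\Sigma$ in two smooth transverse $h$-invariant curves through $q_\Sigma$, providing the two ``axes'' preserved by $G$.

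The first crucial step is to deduce, from the two hypotheses, that $G$ is finite. Closedness of the leaves off $\operatorname{Sing}(\mathcal{F}(X))$, together with the fact that $\Sigma$ is transverse to $X$ in a neighbourhood of $q_\Sigma$, implies that every $G$-orbit is closed and discrete in a suitable neighbourhood $V$ of $q_\Sigma$ in $\Sigma$. Transverse stability supplies a smaller open $W\subset V$ such that every $G$-orbit meeting $W$ is contained in $V$, hence in the compact $\overline V$. Any such orbit is therefore a closed discrete subset of a compact set, hence finite; by the identity principle some iterate $h^N=\operatorname{Id}$, and $G$ is finite cyclic.

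Once $G$ is finite, the ring of $G$-invariant holomorphic germs on $(\Sigma,q_\Sigma)$ contains two algebraically independent elements $\varphi_1,\varphi_2$ whose common level sets are exactly the $G$-orbits. Propagating $\varphi=(\varphi_1,\varphi_2)$ along the flow of $X$, using transverse stability to guarantee that nearby leaves return to $\Sigma$, produces a holomorphic map $F$ constant on leaves of $\mathcal{F}(X)$ on a saturated neighbourhood of $S_X\setminus\{0\}$. Since $F$ is bounded near the origin, Hartogs' extension theorem extends it to a holomorphic germ $F\colon(\mathbb{C}^3,0)\to(\mathbb{C}^2,0)$ fulfilling (a) and (b) of Definition~\ref{def. first int.}. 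I expect the main obstacle to be this last propagation-and-extension step: verifying that the $\mathcal{F}(X)$-saturation of $\Sigma$ covers a full punctured neighbourhood of $0$ in $\mathbb{C}^3$, which is exactly where condition $(\star)$ and the presence of the two extra invariant coordinate hyperplanes must intervene to prevent leaves from escaping the section.
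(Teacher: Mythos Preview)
Your overall architecture---reduce to the holonomy along $S_X$, show it is periodic, then build the first integral---matches the paper's. But two steps diverge from the paper's argument, and one of them is a genuine gap.

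\medskip

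\textbf{From finite orbits to periodicity.} Your line ``by the identity principle some iterate $h^N=\operatorname{Id}$'' skips a step: pointwise finite orbits do not immediately yield a uniform period. The paper's Lemma~\ref{periodic orbits} supplies the missing Baire-category argument (the analytic sets $C_q=\{x:h^{\circ q}(x)=x\}$ exhaust a neighbourhood, so some $C_q$ has interior, whence $h^{\circ q}=\operatorname{id}$). This is easily repaired.

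\medskip

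\textbf{From periodic holonomy to a first integral.} Here your route and the paper's part ways. You propose to take $G$-invariant functions $\varphi_1,\varphi_2$ on $\Sigma$, propagate them along leaves, and extend across the origin by Hartogs. The obstacle you flag is real and is \emph{not} what the paper does. The saturation of $\Sigma$ by $\mathcal{F}(X)$ never contains the invariant coordinate hyperplane transverse to $S_X$ (the ``dicritical plane'' $\{z=0\}$ in the linear model), so you are missing a codimension-one set, not a codimension-two one; Hartogs does not apply, and a Riemann-type extension would require boundedness of the propagated functions near that hyperplane, which you cannot verify without already controlling the global dynamics---essentially without having linearized. The paper avoids this entirely: once the holonomy is periodic (hence linearizable), it invokes the Elizarov--Ilyashenko theorem \cite{EliYa1984}, which says that for a Siegel-domain singularity (guaranteed by condition~$(\star)$) linearizability of the holonomy along $S_X$ forces analytic linearizability of $\mathcal{F}(X)$ itself. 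After that one is looking at $X_{m,n,-k}=mx\partial_x+ny\partial_y-kz\partial_z$, which has explicit monomial first integrals (Lemma~2.3 of \cite{CaSc2009}). So condition~$(\star)$ enters not to control saturation of a transversal, but as the hypothesis under which \cite{EliYa1984} applies.

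\medskip

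\textbf{``Only if'' direction.} Your direct construction of $W$ is morally right but the openness claim is not justified by continuity of $F$ alone; you would need some properness of $F|_\Sigma$ near $q_\Sigma$. The paper instead shows that the restriction $(f_1|_\Sigma,f_2|_\Sigma)$ gives two generically transverse $h$-invariant functions, applies Proposition~\ref{first int. x period.} (together with Theorem~\ref{Brochero}) to conclude that $h$ is periodic, and reads off transverse stability from periodicity of the holonomy. This is less direct than your idea but sidesteps the properness issue.
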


From this result we conclude  the invariance of the
existence of a holomorphic first integral for generic germs in dimension three, under topological
equivalence:

\begin{corollary}
\label{corollary:topological criterion} Let $X,Y\in\operatorname*{Gen}%
(\mathfrak{X}(\mathbb{C}^{3},0))$ be generic germs of holomorphic vector
fields, both satisfying condition $(\star)$. Assume that $X$ and $Y$ are
topologically equivalent. Then $X$ has a holomorphic first integral if and
only if $Y$ admits a holomorphic first integral.
\end{corollary}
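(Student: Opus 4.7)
The plan is to deduce Corollary~\ref{corollary:topological criterion} from Theorem~\ref{theorem:partialtopological criterion} by verifying that the two properties characterizing the existence of a holomorphic first integral in that theorem---closedness of leaves off the singular set, and transverse stability with respect to the distinguished axis---are both invariants of topological equivalence.

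Let $\phi\colon (U_X,0)\to (U_Y,0)$ be a topological equivalence carrying $\mathcal F(X)$ to $\mathcal F(Y)$. Since $\phi$ is a homeomorphism that sends $\Sing(\mathcal F(X))$ onto $\Sing(\mathcal F(Y))$ and leaves to leaves, the closedness of leaves in the regular locus transfers directly from one foliation to the other. The notion of transverse stability introduced just before the theorem is phrased purely in terms of orbits (i.e.\ leaves), transverse sections and open subsets of those sections, so this property is likewise topological in nature---\emph{provided} that $\phi$ identifies the distinguished axes, i.e.\ $\phi(S_X)=S_Y$. Once this identification is in hand, applying Theorem~\ref{theorem:partialtopological criterion} to $X$ and to $Y$ and combining the two equivalences yields the statement of the corollary.

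The main obstacle is therefore to establish $\phi(S_X)=S_Y$, that is, to give a purely topological characterization of the distinguished axis among the three analytic separatrices at the origin. The strategy is to exploit condition $(\star)$: the three eigenvalues lie on a real line through $0$ with exactly one of them isolated, so the pair of transverse eigenvalues along $S_X$ has a positive real ratio (Poincar\'e domain, node-type), whereas along each of the other two axes the corresponding pair has a negative real ratio (Siegel domain, saddle-type). This Poincar\'e/Siegel dichotomy reflects itself in the local topology of $\mathcal F(X)$ near each axis---for instance in the accumulation pattern of nearby leaves on the axis and in the topological conjugacy class of the one-dimensional holonomy generator of a small loop around the origin---and is therefore preserved by any homeomorphism taking leaves to leaves. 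Since each of $\mathcal F(X)$ and $\mathcal F(Y)$ has exactly one axis of Poincar\'e type and two of Siegel type, $\phi$ must send the former of $\mathcal F(X)$ onto the former of $\mathcal F(Y)$, i.e.\ $\phi(S_X)=S_Y$. Combined with the first two paragraphs, this proves the corollary; the most delicate point to pin down is the claim that the Poincar\'e/Siegel type of the transverse singularity is detected by the ambient foliation topology, which can be checked by comparing the structure of holonomy on transversals near, but not through, the origin.
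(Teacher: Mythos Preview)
Your overall strategy---reduce to $\phi(S_X)=S_Y$ and then invoke Theorem~\ref{theorem:partialtopological criterion}---is exactly the paper's. The gap is in how you argue that $\phi(S_X)=S_Y$.

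You pose the problem as distinguishing $S_X$ ``among the three analytic separatrices at the origin'' via the Poincar\'e/Siegel type of the transverse eigenvalue pair. But the relevant direction of the corollary lets you assume that $X$ already has a holomorphic first integral; then (this is Claim~\ref{claim:separatrix}, a consequence of Proposition~\ref{first int. x period.} and \cite{EliYa1984}) $X$ is analytically linearizable as $nx\,\partial_x+my\,\partial_y-kz\,\partial_z$ with $n,m,k\in\mathbb Z_+$. The plane $\{z=0\}$ is then \emph{dicritical}: every leaf in it accumulates at the origin, so $X$ has infinitely many separatrices, not three, and the homeomorphism forces the same on $Y$. Your Poincar\'e/Siegel comparison treats only the three coordinate axes and gives no reason why $\phi(S_X)$ could not land on one of the non-axis separatrices. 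Even restricted to the axes, the claim that Poincar\'e versus Siegel transverse type is read off from the ambient foliation topology via holonomy is not automatic here: with integer eigenvalues all the holonomy eigenvalues are roots of unity, so the distinction would have to come from something else.

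The paper's route is different and more direct. Having linearized $X$, it characterizes $S_X$ by a manifestly topological property that singles it out among \emph{all} separatrices: any separatrix lying in the dicritical plane $\{z=0\}$ has the feature that every invariant neighborhood of it meets infinitely many other separatrices, whereas the distinguished axis does not. One then checks that the distinguished axis $S_Y$ of $Y$ enjoys the same isolation property (using only that $Y\in\operatorname{Gen}(\mathfrak X(\mathbb C^3,0))$ satisfies $(\star)$), which forces $\phi(S_X)=S_Y$; this is Lemma~\ref{lemma:topseparatrix}. In short, the invariant actually used is the dicriticality of the plane opposite to $S_X$---made available by first exploiting the first-integral hypothesis to linearize---rather than the transverse eigenvalue type along each axis.
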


Theorem~\ref{theorem:partialtopological criterion} above can be completed   (cf.
Theorem~\ref{topological criterion}), by weakening the topological hypothesis on the orbits, replacing the transverse stability by the existence of a suitable {\it flag}, i.e., a codimension one foliation, tangent to $\mathcal F(X)$.

\section{Finite orbits and periodic maps}

We determine the necessary conditions on the vector field $X\in
\operatorname*{Gen}(\mathfrak{X}(\mathbb{C}^{3},0))$ in order that
$\mathcal{F}({X})$ has a first integral.
Let $G\in\operatorname*{Diff}(\mathbb{C}^{2},0)$ and $V$ a neighborhood of the
origin where a representative (also denoted by $G$) of the germ $G$ is
defined. Then we denote by
\[
\mathcal{O}_{V}^{+}(G,x)=\left\{  G^{\circ
(n)}(x):\text{ }G^{\circ(j)}(x)\in V\text{, }j=0,\ldots,n\right\}
\]
 the so called
\textit{positive} \textit{semiorbit} of $x\in V$ by $G$. Analogously, the
\textit{negative semiorbit} of $x\in V$ by $G$ is the set $\mathcal{O}_{V}%
^{-}(G,x):=\mathcal{O}_{V}^{+}(G^{-1},x)$. The \textit{orbit} of $x\in V$ by
$G$ is the set $\mathcal{O}_{V}(G,x)=\mathcal{O}_{V}^{+}(G,x)\cup
\mathcal{O}_{V}^{-}(G,x)$. The cardinality of $\mathcal{O}_{V}(G,x)$ is
denoted by $|\mathcal{O}_{V}(G,x)|$.

\begin{theorem}
[Brochero Mart\'inez \cite{Bro2003}]\label{Brochero}Let $G\in
\operatorname*{Diff}(\mathbb{C}^{2},0)$, then the group generated by $G$ is
finite if and only if there exists a neighborhood $V$ of the origin such that
$|\mathcal{O}_{V}(G,x)|<\infty$ for all $x\in V$ and $G$ preserves infinitely
many analytic invariant curves at $0$.
\end{theorem}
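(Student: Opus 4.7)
My plan is to prove both directions of the equivalence. The forward direction is short: if $\langle G\rangle$ has order $n$ then $G^n=\Id$, so trivially $|\mathcal O_V(G,x)|\le n$ for every $V$ and $x$. By the Cartan-Bochner linearization theorem for finite-order biholomorphisms, $G$ is conjugate to its linear part $DG(0)$, which is diagonalizable since $G^n=\Id$; we may therefore assume $G=\Diag(\alpha,\beta)$ with $\alpha,\beta$ roots of unity. Picking positive integers $(p,q)$ with $\alpha^q=\beta^p$ (which exist because $\alpha,\beta$ are torsion), the meromorphic function $x^q/y^p$ is $G$-invariant and its level curves $\{x^q=c\,y^p\}_{c\in\mathbb C}$ form an infinite family of invariant analytic curves through the origin.

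For the converse I proceed in three steps. \emph{Step 1:} each invariant analytic curve through $0$ has a well-defined tangent direction in $\mathbb P^1$ (taken from any irreducible component's normalization), and this direction must be preserved by $DG(0)$. Since a non-scalar element of $\GL(2,\mathbb C)$ admits at most two invariant lines, having infinitely many invariant curves forces $DG(0)=\lambda\cdot\Id$ for some $\lambda\in\mathbb C^\ast$. \emph{Step 2:} $\lambda$ must be a root of unity. The periodic set $\bigcup_{n\ge 1}\Fix(G^n)$ is a countable union of analytic subsets of $V$, so by Baire either some $\Fix(G^n)=V$ (giving $G^n=\Id$ and $\lambda^n=1$), or there is a non-periodic $x\in V$. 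In the latter case, $|\mathcal O_V(G,x)|<\infty$ forces iterates to leave $V$ in either direction, which is impossible for $|\lambda|<1$ (positive iterates converge to $0$), symmetrically for $|\lambda|>1$, and for $|\lambda|=1$ non-torsion one restricts to an invariant analytic curve $C$ and applies the one-variable fact that a germ $t\mapsto\lambda t+O(t^2)$ with $|\lambda|=1$, $\lambda^n\neq 1$ for every $n\geq 1$, cannot have all small orbits finite.

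\emph{Step 3:} Replacing $G$ by $G^m$ with $\lambda^m=1$, we assume $DG(0)=\Id$ and want $G=\Id$. Suppose instead $G(z)=z+\Psi_k(z)+\hot$ with $\Psi_k=(\Psi_k^{(1)},\Psi_k^{(2)})$ the first nonzero homogeneous jet, of degree $k\ge 2$. The tangent direction $(a,b)$ of any invariant analytic curve must annihilate the dicritical polynomial $a\Psi_k^{(2)}(a,b)-b\Psi_k^{(1)}(a,b)$, homogeneous of degree $k+1$; infinitely many invariant directions force this polynomial to vanish identically, equivalent to $\Psi_k=R\cdot(x,y)$ for some homogeneous $R$ of degree $k-1$. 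On an invariant analytic curve tangent to a direction $(1,\mu)$ with $R(1,\mu)\ne 0$ (a cofinite condition within the family of invariant directions), the restriction of $G$ in a smooth parametrization is a parabolic one-variable germ $t\mapsto t+R(1,\mu)t^k+\hot$. By the Leau-Fatou flower theorem this germ has no nonzero periodic points and its orbits in a repelling petal remain in any prescribed neighborhood for arbitrarily many iterates, producing infinite orbits in $V$ and contradicting the hypothesis. Therefore $\Psi_k\equiv 0$, and inductively $G=\Id$, so $G^m=\Id$ and $\langle G\rangle$ is finite.

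The main obstacle is Step 3, where both hypotheses must be used simultaneously: infinitely many invariant curves forces the dicritical structure $\Psi_k=R\cdot(x,y)$, while the finite-orbit hypothesis is activated through the parabolic analysis on one of them. The delicate point is controlling the geometry of the Leau-Fatou petal on a possibly singular invariant curve, ensuring that the petal (and hence an infinite orbit in it) is contained in $V$ — this uses the flexibility to choose the invariant curve close enough to $0$ to keep its petal inside $V$.
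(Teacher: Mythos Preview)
First, note that the paper does not contain a proof of this theorem: it is quoted from Brochero Mart\'inez's paper \cite{Bro2003} and used as a black box (see the discussion immediately following the statement). So there is no ``paper's own proof'' to compare against.

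That said, your argument has a genuine gap, and it occurs twice. In Step~1 you claim that infinitely many $G$-invariant analytic curves through $0$ force $DG(0)$ to be a scalar, on the grounds that each invariant curve contributes a tangent direction fixed by $DG(0)$ and a non-scalar linear map has at most two such directions. The hidden assumption is that infinitely many invariant curves produce infinitely many \emph{distinct} tangent directions, and this is false. Take $G(x,y)=(ix,-y)$: then $G^{4}=\Id$, so orbits are finite, and the parabolas $y=cx^{2}$ are invariant for every $c\in\mathbb C$ (since $-cx^{2}=c(ix)^{2}$); yet all of them are tangent to the $x$-axis. Thus only the two coordinate directions arise, and $DG(0)=\Diag(i,-1)$ is not scalar. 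The same gap reappears in Step~3: the map $G(x,y)=\bigl(x/(1-x),\,y/(1-x)^{2}\bigr)$ is tangent to the identity and every curve $y=cx^{2}$ is invariant, but the first nonzero jet is $\Psi_{2}=(x^{2},2xy)$, with $x\Psi_{2}^{(2)}-y\Psi_{2}^{(1)}=x^{2}y\not\equiv 0$, so $\Psi_{2}$ is \emph{not} of the form $R\cdot(x,y)$. (This particular $G$ fails the finite-orbit hypothesis, so it does not contradict the theorem; it simply shows that your jet computation cannot be carried out at the first level.)

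The repair is not cosmetic: the dicritical structure you need may appear only after several blow-ups, not in the first nonvanishing jet. The standard route --- and the one the present paper itself exploits in Lemma~\ref{fin. orb. dicr. tang. id.} --- is to pass to the infinitesimal generator $\widehat X$ of the tangent-to-identity iterate, resolve it, locate a genuinely dicritical component of the exceptional divisor (which must exist because infinitely many separatrices force dicriticalness of $\widehat X$), and then apply the Leau--Fatou flower theorem on a generic transversal there. Your Step~3 has the right endgame but places it at the wrong stage of the resolution; and your Step~1 should be replaced by an argument that both eigenvalues of $DG(0)$ are roots of unity (e.g.\ via the restriction of $G$ to invariant curves tangent to each eigendirection), not that $DG(0)$ is scalar.
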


Using the same arguments as in the one-dimensional case (cf. \cite{MaMo1980},
Proposition 1.1, p. 475-476), one can prove that a finite abelian (e.g.,
cyclic) subgroup of $\operatorname*{Diff}(\mathbb{C}^{n},0)$ is always
periodic, i.e., it is generated by a periodic (and linearizable) element.
Contrasting with the one dimensional case, in greater dimensions the
finiteness of the orbits in not enough to ensure the periodicity of the group
(cf. \cite{MaMo1980}, Theorem 2, p. 477).

\begin{example}
\label{translation} {\rm Consider the map $G(x,y)=(x+y^{2},y)$. The orbits
of $G$ are confined in the level set of $f(x,y)=y$ and are clearly finite.
Notice that $\#\mathcal{O}_{V}(G,(x,y))\rightarrow\infty$ as $y\rightarrow0$,
thus $G$ is not periodic nor linearizable. Furthermore, the orbits
$\mathcal{O}_{V}(G,(x,y))$ are far from being stable, since in each line
$(y=c)$ the map $G$ acts as a translation.}
\end{example}

We say that two germs of holomorphic functions $f,g\in\mathcal{O}_{2}$ are
\textit{generically transverse} if $df\wedge dg$ is not identically zero.

\begin{proposition}
\label{first int. x period.}Let $f,g\in\mathcal{O}_{2}$ be generically
transverse germs and $G\in\operatorname*{Diff}(\mathbb{C}^{2},0)$ be a complex
map germ having finite orbits and preserving the level sets of both $f$ and
$g$. Then $G$ is periodic.
\end{proposition}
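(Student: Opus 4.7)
The plan is to apply Brochero Mart\'inez's theorem (stated above) to a suitable iterate $G^N$ of $G$: since the finite-orbit hypothesis is inherited by every power of $G$, all I need to produce is infinitely many germs of analytic invariant curves at $0$ for $G^N$. I will first pass to an iterate for which $f$ and $g$ are themselves strictly preserved (not merely their level sets), and then use a pencil of their level sets.

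For the one-dimensional reduction, since $G$ preserves the level sets of $f$, the composition $f\circ G$ is constant on each fiber of $f$, so $f\circ G=\Phi_f\circ f$ for a unique holomorphic germ $\Phi_f\in\Diff(\mathbb{C},0)$ (holomorphy across the critical set of $f$ following from Riemann's removable-singularity theorem, and $\Phi_f'(0)\neq 0$ being forced by the finiteness of orbits, since otherwise $0$ would be hyperbolically attracting or repelling). Finite orbits of $G$ at $x$ project to finite orbits of $\Phi_f$ at $f(x)$, and since $f$ is open and non-constant, $\Phi_f$ has finite orbits on a full neighborhood of $0\in\mathbb{C}$. The classical one-dimensional Mattei--Moussu argument then yields that $\Phi_f$ is periodic and linearizable; the same reasoning handles $\Phi_g$. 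Letting $N$ be the least common multiple of the two periods, I obtain $f\circ G^N=f$ and $g\circ G^N=g$ exactly.

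For each $\lambda\in\mathbb{C}$ the function $f+\lambda g$ is $G^N$-invariant, so $C_\lambda:=\{f+\lambda g=0\}$ is a $G^N$-invariant germ of analytic curve through the origin. For $\lambda\neq\mu$ the intersection $C_\lambda\cap C_\mu$ is contained in $\{f=0\}\cap\{g=0\}$; when $f$ and $g$ are coprime this intersection is $0$-dimensional and the $C_\lambda$ are pairwise distinct, so Brochero's theorem immediately gives that $\langle G^N\rangle$ is finite, and consequently $G$ is periodic. When $f$ and $g$ share a common factor $h=\gcd(f,g)$, the naive pencil acquires the fixed component $\{h=0\}$; writing $f=hf_1,g=hg_1$ with $\gcd(f_1,g_1)=1$, I would instead work with the moving components $\{f_1+\lambda g_1=0\}$ or, if $f_1$ or $g_1$ is a unit, with a rebalanced pencil $\{f^m+\lambda g^n=0\}$ in which the exponents $m,n$ are chosen according to the multiplicities of $f$ and $g$ at $0$ so as to make the moving components pass through the origin. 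The hypothesis $df\wedge dg\not\equiv 0$ forces $f_1$ and $g_1$ to be functionally independent and guarantees, in every case, infinitely many pairwise distinct $G^N$-invariant analytic curves through $0$; Brochero Mart\'inez's theorem then concludes the proof as before. The main technical nuisance I expect is precisely this rebalancing in the common-factor case, since generic transversality does not by itself preclude $f$ and $g$ from having a common divisor, and the pencil must be adapted to the factorization and multiplicities at the origin.
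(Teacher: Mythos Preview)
Your approach is essentially the paper's: both arguments reduce to Brochero Mart\'inez's theorem by exhibiting an infinite pencil of $G$-invariant analytic curves through the origin. Two remarks are in order.

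First, your preliminary passage to $G^N$ is harmless but unnecessary under the paper's reading of the hypothesis: ``$G$ preserves the level sets of $f$'' is taken to mean $f\circ G=f$ on the nose (this is how the paper uses it, e.g.\ in deducing that $h=f/g^k$ is $G$-invariant), so the one-dimensional Mattei--Moussu step can be skipped.

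Second, and more substantively, your treatment of the non-coprime case is where the proposal is incomplete. Writing $f=hf_1$, $g=hg_1$ with $h=\gcd(f,g)$ does give a $G$-invariant ratio $f_1/g_1$, but when one of $f_1,g_1$ is a unit the pencil $\{f_1+\lambda g_1=0\}$ yields only one curve through $0$, and your suggested ``rebalanced'' pencil $\{f^m+\lambda g^n=0\}$ does not obviously work in general: for instance if $g\mid f$ the fixed component $\{g=0\}$ persists and the moving component need not pass through the origin for more than one $\lambda$ unless further reductions are made. The paper handles exactly this difficulty by an explicit Euclidean-type induction on the exponent vectors of the irreducible factors of $f$ and $g$, producing in every case a \emph{pure meromorphic} $G$-invariant function $h_o=f_o/g_o$ (coprime, both in $\mathcal{M}_2$); the curves $\{f_o-c\,g_o=0\}$ then form the desired infinite family. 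The generic-transversality hypothesis is used precisely to rule out the terminal case in which $f$ and $g$ are both powers of a single function, where no such $h_o$ exists. So your outline is correct, but the algebraic reduction you flag as ``the main technical nuisance'' is indeed the heart of the matter, and the paper's Euclidean argument is what fills it.
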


\begin{proof}
The idea of the proof is the following: Since $f$ and $g$ are generically
transverse, then one can find a pure meromorphic function $h_{o}=f_{o}/g_{o}$
whose level sets are preserved by $G$. Hence the infinitely many curves
$f_{o}(x,y)-c\cdot g_{o}(x,y)=0$ with $c\in(\mathbb{C},0)$ pass through the
origin and are invariant by $G$. Thus Theorem \ref{Brochero} ensures that $G$
is periodic.

Now let us construct $h_{o}$. If $f/g$ is already pure meromorphic, then it is
enough to pick $h_{o}:=f/g$. Otherwise one has $f=h\cdot g^{k}$, where
$k\in\mathbb{Z}_{+}$, and $h$ is a germ of a holomorphic function not divisible
by $g$. Clearly, $h$ is $G$-invariant, thus if it has an irreducible component
distinct from the irreducible components of $g$, then $h/g$ must be a
$G$-invariant pure meromorphic function.

Suppose that the decomposition in irreducible components of $g$ and $h$ are of
the form $g=g_{1}^{p_{1}}\cdots g_{n}^{p_{n}}$ and $h=g_{1}^{q_{1}}\cdots
g_{n}^{q_{n}}$. Since $h$ is not divisible by $g$, then there must be
$j_{0}\in\{1,\cdots,n\}$ such that $q_{j_{0}}<p_{j_{0}}$. If there is also
$j_{1}\in\{1,\cdots,n\}$ such that $q_{j_{1}}>p_{j_{1}}$, then $h/g$ is a pure
meromorphic $G$-invariant function.

From now on we suppose that $q_{j}\leq p_{j}$ for all $j=1,\ldots,n$ with at
least one $j_{0}\in\{1,\cdots,n\}$ such that $q_{j_{0}}<p_{j_{0}}$. If there
is $j_{1}\in\{1,\cdots,n\}$ such that $q_{j_{1}}=p_{j_{1}}$, then after
reordering the indexes (if necessary) we may suppose that: (i) $q_{i}<p_{i}$
for all $i=1,\ldots,n_{0}$; (ii) $q_{i}=p_{i}$ for all $i=n_{0}+1,\cdots,n$;
for some $n_{0}\in\{1,\cdots,n-1\}$. Then $\overline{h}:=g/h=g_{1}%
^{p_{1}-q_{1}}\cdots g_{n_{0}}^{p_{n_{0}}-q_{n_{0}}}$ is a $G$-invariant germ
of a holomorphic function. Now, let $s_{1}:=\left[  p_{1}/(p_{1}-q_{1})\right]
+1$ (where $\left[  x\right]  $ denotes the integer part of $x\in\mathbb{R}$),
then a straightforward calculation shows that $g/\overline{h}^{s_{1}}$ is a
pure meromorphic function.

Hereafter we suppose that $q_{j}<p_{j}$ for all $j=1,\ldots,n$. Recall that
the Euclid's algorithm of a pair of positive integers $(p,q)$, $p>q$, is the
sequence of pairs of positive integers $\{(p_{j},q_{j})\}_{j=1}^{n+1}$ given
by: (1) $(p_{j+1},q_{j+1}):=(p,q)$; (2) $p_{j}=q_{j}\cdot r_{j}+s_{j}$, where
$r_{j}:=\left[  p/q\right]  $ and $s_{j}<q_{j}$; (3) $(p_{j+1},q_{j+1}%
):=(q_{j},r_{j})$; and (4) $s_{n}>0$ and $s_{n+1}=0$. This is called the
Euclid's sequence of the pair $(p,q)$. For simplicity, suppose that $g$ and
$h$ have only two irreducible components, say $g=f^{p}(\overline
{f})^{\overline{p}}$ and $h=f^{q}(\overline{f})^{\overline{q}}$, and let
$\{(p_{j},q_{j})\}_{j=1}^{n+1}$ and $\{(\overline{p}_{j},\overline{q}%
_{j})\}_{j=1}^{n+1}$ be the Euclid's sequence of $(p,q)$ and $(\overline
{p},\overline{q})$, respectively. If $r_{1}=[p_{1}/q_{1}]<[\overline{p}%
_{1}/\overline{q}_{1}]=\overline{r}_{1}$, then $p_{1}-(r_{1}+1)q_{1}<0$ and
$\overline{p}_{1}-(\overline{r}_{1}+1)\overline{q}_{1}\geq0$. If $\overline
{p}_{1}-(\overline{r}_{1}+1)q_{1}\neq0$, then $g/h^{r_{1}+1}$ is a
$G$-invariant germ of a pure meromorphic function, otherwise $g/h^{r_{1}%
+1}=1/f^{(r_{1}+1)q_{1}-p_{1}}$ and $g\cdot(g/h^{r_{1}+1})^{p_{1}}$ is a
$G$-invariant germ of a pure meromorphic function. Arguing inductively along the
Euclid's sequences of $(p,q)$ and $(\overline{p},\overline{q})$ one can always
construct a $G$-invariant pure meromorphic function unless $r_{j}=\overline
{r}_{j}$ for all $j=1,\cdots,n+1$. But this means that $(p,q)=(\alpha
s_{n},\beta s_{n})$ and $(\overline{p},\overline{q})=(\alpha\overline{s}%
_{n},\beta\overline{s}_{n})$ for some $\alpha,\beta\in\mathbb{Z}_{+}$.
Therefore $g$, $h$, and $f$ are powers of the same holomorphic function
$f^{s_{n}}(\overline{f})^{\overline{s}_{n}}$, thus $f$ and $g$ cannot be
generically transverse. A contradiction! The reasoning in the case of many
irreducible factors is analogous, being in fact a consequence of the above reasoning.
\end{proof}

A straightforward consequence is the following:

\begin{corollary}
Let $X\in\operatorname*{Gen}(\mathfrak{X}(\mathbb{C}^{3},0))$ and $S_{X}$ be
the distinguished axis of $X$. Suppose that $\mathcal{F}({X})$ admits a
meromorphic first integral, then the holonomy group $\operatorname*{Hol}%
(\mathcal{F}({X}),S_{X},\Sigma)$ is periodic.
\end{corollary}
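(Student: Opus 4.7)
The plan is to exhibit a generator of the holonomy group as a diffeomorphism germ satisfying the hypotheses of Brochero's theorem (Theorem~\ref{Brochero}), and then to use the constructive approach of Proposition~\ref{first int. x period.} to verify its second hypothesis. Since the distinguished axis $S_{X}$ is a smooth holomorphic curve, its puncture $S_{X}\setminus\{0\}$ has infinite cyclic fundamental group, so $\Hol(\mathcal{F}(X),S_{X},\Sigma)$ is cyclic, generated by a single germ $G\in\Diff(\mathbb{C}^{2},0)$ after the standard identification of $\Sigma$ with $(\mathbb{C}^{2},0)$. Writing the meromorphic first integral as a pair $(F_{1},F_{2})$ of meromorphic germs on $(\mathbb{C}^{3},0)$ with $dF_{1}\wedge dF_{2}\not\equiv 0$, I will pass to the restrictions $\widetilde{F}_{i}:=F_{i}|_{\Sigma}$. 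These are meromorphic on $\Sigma$; since $G$ moves points along leaves of $\mathcal{F}(X)$ and each $F_{i}$ is constant along leaves, each $\widetilde{F}_{i}$ satisfies $\widetilde{F}_{i}\circ G=\widetilde{F}_{i}$, and for a generic choice of $\Sigma$ the restrictions remain generically transverse.

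Next I will verify the two hypotheses of Brochero's theorem for $G$. Finiteness of orbits comes for free: the $G$-orbit of any $x$ near $0$ is confined to $\{\widetilde{F}_{1}=\widetilde{F}_{1}(x)\}\cap\{\widetilde{F}_{2}=\widetilde{F}_{2}(x)\}$, a $0$-dimensional analytic subset by generic transversality, hence finite in a sufficiently small neighborhood of $0$. For the existence of infinitely many invariant analytic curves through $0$, I will rerun the Euclidean-algorithm construction from the proof of Proposition~\ref{first int. x period.}, now starting from the $G$-invariant meromorphic germs $\widetilde{F}_{1},\widetilde{F}_{2}$, to manufacture a pure meromorphic $G$-invariant germ $h=p/q$ with $p,q\in\mathcal{O}_{2}$ coprime and $p(0)=q(0)=0$. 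The pencil $\{p-cq=0\}_{c\in\mathbb{C}}$ then supplies infinitely many distinct $G$-invariant analytic curves, each passing through the base point $0$.

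With both hypotheses verified, Brochero's theorem forces $\langle G\rangle$ to be finite; a finite cyclic subgroup of $\Diff(\mathbb{C}^{2},0)$ is generated by a periodic (indeed linearizable) element, so $\Hol(\mathcal{F}(X),S_{X},\Sigma)$ is periodic. The delicate point is the construction of the pure meromorphic $G$-invariant germ with origin as a base point of its pencil: when one of the $\widetilde{F}_{i}$ is pure meromorphic with an indeterminacy at $0$ the pencil is immediate, but the intermediate and fully holomorphic cases require the case analysis of Proposition~\ref{first int. x period.} applied to appropriately normalized (possibly inverted) representatives of the $\widetilde{F}_{i}$.
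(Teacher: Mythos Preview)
Your approach is essentially the paper's: the Corollary is stated there as a ``straightforward consequence'' of Proposition~\ref{first int. x period.}, and your argument is precisely the unpacking of that phrase---restrict the two components of the first integral to $\Sigma$, observe they are $G$-invariant and generically transverse, and feed them into the Proposition~\ref{first int. x period.}/Theorem~\ref{Brochero} machinery. Your case analysis for passing from meromorphic $\widetilde F_i$ to the holomorphic setting of Proposition~\ref{first int. x period.} (inverting or subtracting a constant when there is no indeterminacy at $0$, and reading off the pencil directly when there is) is the right bridge and is implicit in the paper's one-line justification.

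One step deserves more care than you give it. The claim that $\{\widetilde F_1=\widetilde F_1(x)\}\cap\{\widetilde F_2=\widetilde F_2(x)\}$ is $0$-dimensional ``by generic transversality'' is not true for every $x$: generic transversality only says $d\widetilde F_1\wedge d\widetilde F_2\not\equiv 0$, and a common irreducible component of two level curves can still occur (e.g.\ $\widetilde F_1=x$, $\widetilde F_2=xy$, and the line $x=0$). So your finiteness-of-orbits argument, as written, only covers points off the critical locus $\{d\widetilde F_1\wedge d\widetilde F_2=0\}$. A clean repair is to note that $\widetilde F_i\circ G=\widetilde F_i$ forces $G$ to permute the points of each fiber of $\Phi=(\widetilde F_1,\widetilde F_2)$; since the complement of the discriminant in the target is connected, the generic fiber has a fixed finite cardinality $d$, hence $G^{\circ d!}$ is the identity on a dense open set and therefore everywhere. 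This already yields periodicity and renders the Brochero step unnecessary, though of course either route is fine. The paper does not spell this out either, so your proof is at the same level of rigor as the original.
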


\begin{example}
\label{blowing up G} {\rm Blowing up the diffeomorphism (cf.
\cite{Broetal2008}) $G=(g_{1},g_{2})=(x+y^{2},y)$ at the origin one has
\begin{align*}
\widetilde{G}(t,x)  &  =\left(  \frac{g_{2}(x,tx)}{g_{1}(x,tx)},g_{1}%
(x,tx)\right)  =\left(  \frac{t}{1+t^{2}x},x+tx\right) \\
&  =(t(1-t^{2}x+t^{4}x^{2}-t^{6}x^{3}+\cdots),x(1+t))\\
&  =(t-t^{3}x+t^{5}x^{2}-t^{7}x^{3}+\cdots,x+tx)
\end{align*}
whose orbits are finite and confined in the level sets of $\widetilde
{f}(t,x)=tx$. Further, $G$ acts in these level sets of $\widetilde{f}$ in some
sort of translation whose orbits increase in cardinality as $\widetilde
{f}(t,x)\rightarrow0$. In particular, Proposition \ref{first int. x period.}
ensures that $G$ does not preserve the level sets of a couple of generically
transverse functions $f,g\in\mathcal{O}_{2}$.}
\end{example}

\section{Closed leaves versus first integrals}

Now we construct an example showing that the closing of the leaves is not
sufficient to ensure the existence of first integrals for $\mathcal{F}({X})$
with $X\in\operatorname*{Gen}(\mathfrak{X}(\mathbb{C}^{3},0))$. The first
thing to be remarked is that the linear part of a generic vector field germ
having a first integral is determined by Proposition 1 in \cite{CaSc2009}. As
a consequence (cf. ï¿½2.3 in \cite{CaSc2009}) $\operatorname*{Hol}%
(\mathcal{F}({X}),S_{X},\Sigma)$ must be (a cyclic group generated by) a
resonant map preserving two smooth curves crossing transversely. In
particular, one cannot expect a map like the one in Example \ref{translation}
appearing as the (generator of the) holonomy of some $X\in\operatorname*{Gen}%
(\mathfrak{X}(\mathbb{C}^{3},0))$ with respect to $S_{X}$. Thus we blow up
such map and look to a neighborhood of the point determined by the exceptional
divisor and the strict transform of $(y=0)$.
Let $X\in\mathfrak{X}(\mathbb{C}^{3},0)$ be given by
\[
X(x)=-m_{1}[x_{1}(1+a_{1}(x))+x_{2}b_{1}(x)]\frac{\partial}{\partial x_{1}%
}-m_{2}x_{2}(1+a_{2}(x))\frac{\partial}{\partial x_{2}}+x_{3}\frac{\partial
}{\partial x_{3}}%
\]
where $m_{1},m_{2},k\in\mathbb{Z}_{+}$, $S:=(x_{1}=x_{2}=0)$ and
$\Sigma:=(x_{3}=1)$. Now consider the closed loop $\gamma:[0,1]\longrightarrow
S$ given by $\gamma(t)=(0,0,e^{2\pi\mathbf{i}t})$ and let $\overline{\Gamma
}_{(x_{1},x_{2})}(t)=(\Gamma_{1}(t,x_{1},x_{2}),\Gamma_{2}(t,x_{1}%
,x_{2}),\gamma(t))$ be its lifting along the leaves of $\mathcal{F}({X})$
starting at $(x_{1},x_{2},1)\in\Sigma$. In particular, the map $h\in
\operatorname*{Diff}(\mathbb{C}^{2},0)$ given by $\overline{\Gamma}%
_{(x_{1},x_{2})}(1)=(h(x_{1},x_{2}),1)$ is a generator of $\operatorname*{Hol}%
(\mathcal{F}({X}),S,\Sigma)$. Since $\overline{\Gamma}_{(x_{1},x_{2})}(t)$
belongs to a leaf of $\mathcal{F}({X})$, then
\[
\frac{\partial}{\partial t}\overline{\Gamma}_{(x_{1},x_{2})}(t)=\alpha
X(\Gamma_{1}(t,x_{1},x_{2}),\Gamma_{2}(t,x_{1},x_{2}),\gamma(t)).
\]
From this vector equation one has $\gamma^{\prime}(t)=\alpha\gamma(t)$, thus
$\alpha=2\pi\mathbf{i}$. Furthermore%
\begin{align}
\frac{\partial}{\partial t}\Gamma_{1}  &  =-2m_{1}\pi\mathbf{i[}\Gamma
_{1}\cdot(1+a_{1}(\Gamma_{1},\Gamma_{2},\gamma))+\Gamma_{2}\cdot b_{1}%
(\Gamma_{1},\Gamma_{2},\gamma)]\text{,}\label{eq1}\\
\frac{\partial}{\partial t}\Gamma_{2}  &  =-2m_{2}\pi\mathbf{i}\Gamma_{2}%
\cdot(1+a_{2}(\Gamma_{1},\Gamma_{2},\gamma))\text{.} \label{eq2}%
\end{align}

\begin{example}
{\rm  Let $X(x)=-[x_{1}+x_{2}^{2}b(x_{3})]\frac{\partial}{\partial x_{1}%
}-3x_{2}\frac{\partial}{\partial x_{2}}+x_{3}\frac{\partial}{\partial x_{3}}$,
then $S:=\{x_{1}=x_{2}=0\}$ is invariant by $X$ and the holonomy of
$\mathcal{F}({X})$ with respect to $S$ evaluated at $\Sigma=(x_{3}=1)$ has the
form $h=(h_{1},h_{2})$ with $h_{j}(x_{1},x_{2})=\Gamma_{j}(1,x_{1},x_{2})$,
where $\Gamma_{1}$ and $\Gamma_{2}$ satisfy respectively equations (\ref{eq1})
and (\ref{eq2}) above. Now if we let $\Gamma_{n}(t,x_{1},x_{2})=\sum
_{i+j\geq1}c_{i,j}^{n}(t)x_{1}^{i}x_{2}^{j}$, then (\ref{eq2}) is written in
the form }%
\[
\mathrm{\frac{\partial}{\partial t}\Gamma_{2}=-6\pi\mathbf{i}\Gamma_{2}.}%
\]
{\rm More precisely $\frac{d}{dt}c_{i,j}^{2}(t)=-6\pi\mathbf{i}\cdot
c_{i,j}^{2}(t)$, thus $c_{i,j}^{2}(t)=\lambda_{i,j}^{2}\exp(-6\pi\mathbf{i}t)$
for some $\lambda_{i,j}^{2}\in\mathbb{C}$. Since $\Gamma_{2}(0,x_{1}%
,x_{2})=x_{2}$, then $\lambda_{0,1}^{2}=1$ and $\lambda_{i,j}^{2}=0$
otherwise. Therefore $\Gamma_{2}(t,x_{1},x_{2})=\exp(-6\pi\mathbf{i}t)\cdot
x_{2}$ and $h_{2}(x_{1},x_{2})=x$}$_{2}${\rm . On the other hand,
(\ref{eq1}) is written in the form }%
\[
\mathrm{\frac{\partial}{\partial t}\Gamma_{1}=-2\pi\mathbf{i[}\Gamma
_{1}+e^{-6\pi\mathbf{i}t}x_{2}^{2}b(\gamma(t))]=-2\pi\mathbf{i(}\Gamma
_{1}+e^{-6\pi\mathbf{i}t}b(e^{2\pi\mathbf{i}t})\cdot x_{2}^{2}).}%
\]
{\rm Analogously, $\frac{d}{dt}c_{i,j}^{1}(t)=-2\pi\mathbf{i}\cdot
c_{i,j}^{1}(t)$ for all $(i,j)\neq(0,2)$. Since $\Gamma_{1}(0,x_{1}%
,x_{2})=x_{1}$, then $c_{1,0}^{1}(t)=\exp(-2\pi\mathbf{i}t)\cdot x_{1}$ and
$c_{i,j}^{1}(t)=0$ for all $(i,j)\notin\{(1,0),(0,2)\}$. Finally $\frac{d}%
{dt}c_{0,2}^{1}(t)=-2\pi\mathbf{i}(c_{0,2}^{1}(t)+e^{-6\pi\mathbf{i}%
t}b(e^{2\pi\mathbf{i}t}))$. Now recall that the solution to the Cauchy
problem
\[
\alpha^{\prime}(t)=-2\pi\mathbf{i}\cdot\alpha(t)-2\pi\mathbf{i}e^{-6\pi
\mathbf{i}t}b(e^{2\pi\mathbf{i}t})\text{, }\alpha(0)=0\text{.}%
\]
is given by%
\[
\alpha(t)=-2\pi\mathbf{i}e^{-2\pi\mathbf{i}t}\int_{0}^{t}e^{2\pi\mathbf{i}%
s}e^{-6\pi\mathbf{i}s}b(e^{2\pi is})ds=-e^{-2\pi\mathbf{i}t}\int_{0}%
^{t}e^{-6\pi\mathbf{i}s}b(e^{2\pi\mathbf{i}s})2\pi\mathbf{i}e^{2\pi
\mathbf{i}s}ds
\]
In particular, $\alpha(1)=-e^{-2\pi\mathbf{i}}\int_{\gamma}\frac{b(z)}{z^{3}%
}dz$. Thus, if we let $b(z)=-z^{2}/2\pi\mathbf{i}$, then $\alpha(1)=1$, and
$h(x_{1},x_{2})=(x_{1}+x_{2}^{2},x_{2})$.}
\end{example}

Completing the above example we obtain:

\begin{example}
\label{main example} {\rm  Consider the vector field $X(x,y,z)=-[x-\frac
{1}{2\pi\mathbf{i}}y^{2}z^{2}]\frac{\partial}{\partial x}-3y\frac{\partial
}{\partial y}+z\frac{\partial}{\partial z}$, after one blow up along the
$z$-axis one has
\begin{align*}
\pi^{\ast}X(t,x,z)  &  =-(x-\frac{1}{2\pi\mathbf{i}}t^{2}x^{2}z^{2}%
)\frac{\partial}{\partial x}+\frac{1}{x}(-3tx-t(-x-t^{2}x^{2}z^{2}%
))\frac{\partial}{\partial y}+z\frac{\partial}{\partial z}\\
&  =-x(1-\frac{1}{2\pi\mathbf{i}}t^{2}xz^{2})\frac{\partial}{\partial
x}-t(2-t^{2}xz^{2})\frac{\partial}{\partial x_{2}}+z\frac{\partial}{\partial
z}%
\end{align*}
which has an isolated singularity at the origin, and whose holonomy with
respect to the $z$-axis is precisely the map $\widetilde{G}$ in Example
\ref{blowing up G}. Thus it satisfies condition $(\star)$ and has all leaves
closed but does not admit a first integral in the sense of \cite{CaSc2009} (or
Definition \ref{def. first int.}).}
\end{example}

\section{The main result: stability, flags and first integrals. }

Example \ref{main example} shows that the statements of Theorems 1.2 and 1.3
in \cite{CaSc2009} are incomplete. The correct statements are thus suggested
by Proposition \ref{first int. x period.}. In one word, we need to consider stability.

\begin{definition}
[stability]{\rm A germ of a holomorphic vector field $X\in\operatorname*{Gen}%
(\mathfrak{X}(\mathbb{C}^{3},0))$ is said to be \textit{transversely stable}
with respect to $S_{X}$ if for any representative $X_{U}$ of the germ $X$,
defined in an open neighborhood $U$ of the origin, any open section
$\Sigma\subset U$ transverse to $S_{X}$ with $\Sigma\cap S_{X}=\{q_{\Sigma}%
\}$, and any open set $q_{\Sigma}\in V\subset\Sigma$ there is an open subset
$q_{\Sigma}\in W\subset V$ such that all orbits of $X_{U}$ through $W$
intersect $\Sigma$ only in $V$. }

{\rm A germ of a map $G\in\operatorname*{Diff}(\mathbb{C}^{2},0)$ is said to
be (topologically) \textit{stable} if for any representative $G\colon U \to
G(U)$, where $U$ is an open neighborhood of the origin, and any open set
$V\subset U$ there is an open subset $W\subset V$ such that $G^{\circ
(n)}(W)\subset V$ for all $n\in\mathbb{Z}$, i.e., all iterates of $G$ starting
in $W$ remain in $V$. }
\end{definition}

\begin{lemma}
\label{periodic orbits} Let the germ $G\in\operatorname*{Diff}(\mathbb{C}%
^{2},0)$ be represented by the map $G\colon W\rightarrow V$, where $W\subset
V$ are open neighborhoods of the origin with compact closure. Suppose $G$ has
finite orbits with stable positive semiorbit, i.e., there are $W$ and $V$ as
above with $W\subset V$ and satisfying $G^{\circ(n)}(x)\subset V$ for all
$x\in W$ and $n\in\mathbb{Z}_{+}$. Then $G$ is periodic, i.e., there is
$p\in\mathbb{Z}_{+}$ such that $G^{\circ p}=\operatorname*{id}$.
\end{lemma}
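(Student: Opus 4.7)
The plan is to combine the finite-orbit hypothesis with invertibility of $G$ to show that every point of $W$ is periodic, and then apply Baire category to the stratification of $W$ by period. The conclusion will follow from the identity principle for holomorphic maps on $\mathbb{C}^2$.

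First I would observe that the stability condition $G^{\circ n}(x)\in V$ for all $x\in W$ and $n\in\mathbb{Z}_{+}$, together with $G$ being a germ of a diffeomorphism (hence defined and injective on a neighborhood of the origin which we may assume contains $V$ after shrinking), ensures that the iterates $G^{\circ n}$ are all well-defined on $W$ and remain in $V$. Fix $x\in W$. By hypothesis, the positive semiorbit $\{G^{\circ n}(x):n\geq 0\}\subset V$ is a finite set, so the pigeonhole principle yields integers $0\leq i<j$ with $G^{\circ i}(x)=G^{\circ j}(x)$. Injectivity of $G$ on $V$, applied $i$ times, gives $x=G^{\circ (j-i)}(x)$. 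Hence every $x\in W$ is a periodic point of $G$ of some positive period $p(x)$.

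Next, set $F_{p}:=\{x\in W:G^{\circ p}(x)=x\}$ for each $p\in\mathbb{Z}_{+}$. Since $G^{\circ p}$ is holomorphic on $W$, the set $F_{p}$ is the zero locus of the holomorphic map $x\mapsto G^{\circ p}(x)-x$, hence an analytic (in particular closed) subset of $W$. The previous paragraph gives the countable decomposition
\[
W=\bigcup_{p\geq 1} F_{p}.
\]
Because $W$ is a nonempty open subset of $\mathbb{C}^{2}$, it is a Baire space, so some $F_{p_{0}}$ must have nonempty interior. But an analytic subset of the connected open set $W\subset\mathbb{C}^{2}$ with nonempty interior coincides with $W$ (identity principle applied to $G^{\circ p_{0}}-\operatorname{id}$). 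Therefore $G^{\circ p_{0}}\equiv\operatorname{id}$ on a neighborhood of the origin, which is the required germ-level identity.

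The only delicate point is the justification that iterates are defined and that injectivity may be invoked uniformly along a semiorbit; this is precisely what stability together with shrinking $V$ (so that $G^{-1}$ is defined on $V$) provides, and is the reason stability is needed in addition to mere finiteness of orbits. Once that technicality is handled, the remainder reduces to Baire category plus the identity principle. Alternatively, after establishing that every point of $W$ is periodic, one could argue via Theorem~\ref{Brochero}: either some $F_{p}$ has full dimension (and we are done directly), or the $F_{p}$ contribute infinitely many distinct irreducible invariant analytic curves through $0$, in which case Brochero Mart\'inez's theorem forces $\langle G\rangle$ to be finite and therefore $G$ periodic. The Baire route is shorter and avoids having to analyze irreducible components of the $F_{p}$.
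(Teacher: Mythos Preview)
Your proof is correct and follows essentially the same route as the paper: once every point of $W$ is seen to be periodic, the sets $F_{p}=\{x\in W:G^{\circ p}(x)=x\}$ cover $W$, and Baire category together with the identity principle yields a uniform period. Your write-up is in fact more explicit than the paper's about why every point is periodic (pigeonhole plus injectivity of $G$) and about why an $F_{p}$ with interior must equal all of $W$; the alternative detour via Theorem~\ref{Brochero} that you mention is not used in the paper either.
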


\begin{proof}
First notice that the topological hypothesis on the orbits of $G$ ensures that
these orbits are all periodic. Now consider the analytic set $C_{q}:=\{x\in
W\,:G^{\circ q}=x\}$, where $q\in\mathbb{Z}_{+}$. Then $C_{q}$ is a closed set
without interior points. Suppose that there is no $p\in\mathbb{Z}_{+}$ as
stated, then $W=%
{\textstyle\bigcup\nolimits_{q=1}^{\infty}}
C_{q}$. From Baire's category theorem, the set $W$ must have no interior point
also. A contradiction! The result then follows.
\end{proof}

Recall that a germ of a foliation $\mathcal{F}$ in $(\mathbb{C}^{2},0)$ has a
\textit{dicritical component} if there appears a dicritical singularity along
its resolution process or, equivalently, there are infinitely many leaves
adhering only at the origin (\cite{CaSa}). By a \textit{flag} containing
$\mathcal{F}(X)$, we mean a germ of a codimension one holomorphic foliation
$\mathcal{F}$ at $0\in\mathbb{C}^{3}$ with the property that (for some
representatives of each foliation defined in a common domain containing the
origin) each leaf of $\mathcal{F}(X)$ is contained in some leaf of
$\mathcal{F}$. The notion of flag is detailed in \cite{Mol}. As for our
purposes, assume that $\mathcal{F}$ is defined by a germ of an integrable
holomorphic one-form $\omega=Adx+Bdy+Cdz$ with a singularity at the origin. In
this case one writes $\mathcal{F}=\mathcal{F}_{\omega}$ and then the flag
condition can be stated as $i_{X}\omega\equiv0$. Given such a germ
$\mathcal{F}_{\omega}$, the singular set $\operatorname*{Sing}(\mathcal{F}%
_{\omega})$ is a codimension $\geq2$ germ of an analytic subset at the origin.
A codimension two irreducible component $K\subset\operatorname*{Sing}%
(\mathcal{F})\setminus\{0\}$ is a \textit{Kupka type} component if $d\omega$
does not vanish along $K$. According to Kupka's theorem (\cite{Omegar,
Kupka}), for a representative $\mathcal{F}_{U}$ of $\mathcal{F}$ in an open
neighborhood $0\in U$, where $\mathcal{F}$ is given by an integrable
holomorphic one-form $\omega_{U}$, and a representative $K_{U}\subset
\operatorname*{Sing}(\mathcal{F}_{U})$ of the component $K\subset
\operatorname*{Sing}(\mathcal{F}_{\omega})$, there is a plane foliation
$\eta(K)$ in a neighborhood of the origin $0\in\mathbb{C}^{2}$ such that for
each point $q\in K_{U}$ there is a holomorphic submersion $\varphi_{q}\colon
V_{q}\rightarrow\mathbb{C}^{2}$, with the property that $q\in V_{q}\subset
U,\,\varphi_{q}(q)=0$ and $\varphi_{q}^{\ast}(\eta(K))=\left.  \mathcal{F}%
_{U}\right\vert _{V_{q}}$. The foliation $\eta(K)$ is then called the
\textit{Kupka transverse type} of $\mathcal{F}$ along the Kupka component $K$.
One says that the Kupka component $K$ is \textit{dicritical} if the
corresponding transverse type $\eta(K)$ has a dicritical singularity at the
origin $0\in\mathbb{C}^{2}$. A particular case of a dicritical Kupka component
is the one given by $\mathcal{F}(X)\times\mathbb{C}$, where $\mathcal{F}(X)$
is a germ of a foliation in $(\mathbb{C}^{2},0)$ defined by a resonant
linearizable foliation in the Poincar\'e domain, i.e., in some appropriate
coordinate system $\mathcal{F}(X)$ is given by a vector field $X$ on
$(\mathbb{C}^{2},0)$ of the form $X=mx\frac{\partial}{\partial x}%
+ny\frac{\partial}{\partial y}$, where $m,n\in\mathbb{Z}_{+}$. Such kind of
dicritical Kupka component shall be called of \emph{radial type}.

Given a flag $\mathcal{F}_{\omega}$ containing the foliation $\mathcal{F}(X)$,
consider its restriction $\left.  \mathcal{F}_{\omega}\right\vert _{\Sigma}$
to a transverse section $\Sigma$ as above. Since $\Sigma$ is transverse to
$\mathcal{F}(X)$, it is also transverse to $\mathcal{F}_{\omega}$ off the
singular set $\operatorname*{Sing}(\mathcal{F}(X))$ and therefore one may
identify the germ of $\mathcal{F}_{\omega}$ at the point $q_{\Sigma}%
=\Sigma\cap S(X)$ with the germ of a foliation at the origin $0\in
\mathbb{C}^{2}$. One says then that $\left.  \mathcal{F}_{\omega}\right\vert
_{\Sigma}$ is dicritical if this corresponding germ in dimension two is dicritical.

Let $\mathcal{G}$ be a germ of a foliation in $(\mathbb{C}^{2},0)$ having a
dicritical component and $\phi\in\operatorname*{Diff}_{\operatorname*{id}%
}(\mathbb{C}^{2},0)$ be given by $\phi(x,y)=\exp[1]\widehat{X}(x,y)$ for a
(unique) formal vector field $\widehat{X}$ of order at least two (cf.
\cite{Broetal2008}, \cite{CaSc2011}) called the \textit{infinitesimal
generator} of $\phi$. Then one says that $\mathcal{G}$ is adapted to $\phi$ if
there is a resolution $\pi:(\mathcal{M},D)\longrightarrow(\mathbb{C}^{2},0)$
of $\widehat{X}$ such that $\pi^{\ast}(\mathcal{G})$ has infinitely many
curves transversal to $D$ (this happens precisely when we blow-up a dicritical
component of $\mathcal{G}$ along the resolution of $\widehat{X}$). In
particular, any dicritical $\mathcal{G}$ is automatically adapted to $\phi$.

Now recall that $X$\ has a linear part given by $J^{1}(X)=mx\frac{\partial
}{\partial x}+ny\frac{\partial}{\partial y}-kz\frac{\partial}{\partial z}$.
Thus the holonomy of $\mathcal{F}$\ with respect to the distinguished axis $z$
is periodic with linear part given by $J^{1}(h)(x,y)=(\exp(-\frac{2m\pi i}%
{k})x,\exp(-\frac{2n\pi i}{k})y)$; in particular $\phi:=h^{\circ(k)}$ is
tangent to the identity. Therefore, this map can be written locally in the
form $\phi(x,y)=\exp[1]\widehat{X}(x,y)$, where $\widehat{X}$ is its
infinitesimal generator. Then one says that $(\mathcal{F}({X}),\mathcal{F}%
_{\omega})$ is an \textit{adapted flag} if $\mathcal{F}({X})\subset
\mathcal{F}_{\omega}$ is a flag such that $\left.  \mathcal{F}_{\omega
}\right\vert _{\Sigma}$ is a germ of a foliation having a dicritical component
and adapted to $\phi=h^{\circ(k)}$. In particular, if $\left.  \mathcal{F}%
_{\omega}\right\vert _{\Sigma}$ is dicritical, then $(\mathcal{F}%
({X}),\mathcal{F}_{\omega})$ is automatically an adapted flag.

Notice that the last definitions are of finite determinacy character.

Using this terminology, one may correctly restate the main result of
\cite{CaSc2009} as follows.

\begin{theorem}
\label{topological criterion}Suppose that $X\in\operatorname*{Gen}%
(\mathfrak{X}(\mathbb{C}^{3},0))$ satisfies condition $(\star)$ and let
$S_{X}$ be the distinguished axis of $X$. Then the following conditions are equivalent:

\begin{description}
\item[(1)] The leaves of $\mathcal{F}({X})$ are closed off the singular set
$\operatorname*{Sing}(\mathcal{F}({X}))$ and transversely stable with respect
to $S_{X}$;

\item[(2)] $\operatorname*{Hol}(\mathcal{F}({X}),S_{X},\Sigma)$ has finite
orbits and is (topologically) stable;

\item[(3)] $\operatorname*{Hol}(\mathcal{F}({X}),S_{X},\Sigma)$ is periodic
{\rm (}in particular linearizable and finite{\rm )};

\item[(4)] $\mathcal{F}({X})$ has a holomorphic first integral.
\end{description}

Moreover, in terms of flags of foliations, the above conditions are also
equivalent to each of the following conditions:

\begin{description}
\item[(5)] The leaves of $\mathcal{F}({X})$ are closed off
$\operatorname*{Sing}(\mathcal{F}({X}))$ and there is an adapted flag
$(\mathcal{F}({X}),\mathcal{F}_{\omega})$;

\item[(6)] The leaves of $\mathcal{F}({X})$ are closed off
$\operatorname*{Sing}(\mathcal{F}({X}))$ and there is a flag $\mathcal{F}%
({X})\subset\mathcal{F}_{\omega}$ such that $\mathcal{F}_{\omega}$ is a Kupka
component of radial type.
\end{description}
\end{theorem}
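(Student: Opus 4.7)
The plan is to prove the equivalence of $(1)$--$(4)$ by the cyclic chain $(4)\Rightarrow(1)\Rightarrow(2)\Rightarrow(3)\Rightarrow(4)$, and then fold in the flag conditions via $(4)\Rightarrow(6)\Rightarrow(5)\Rightarrow(3)$.

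For $(4)\Rightarrow(1)$, once a holomorphic first integral $F$ exists, every leaf of $\mathcal{F}(X)$ lies in a level set of $F$ and is therefore closed off $\operatorname{Sing}(\mathcal{F}(X))$; the closedness of those level sets immediately yields transverse stability with respect to $S_X$. For $(1)\Rightarrow(2)$, fix a transverse section $\Sigma$ and a generator $G$ of the holonomy. The orbit $\mathcal{O}_V(G,x)$ sits inside $L\cap\Sigma$, where $L$ is the leaf through $x$; closedness of $L$ forces this intersection to be discrete, while transverse stability of $X$ confines the orbit to a relatively compact piece of $\Sigma$, so the orbit is finite. Topological stability of $G$ itself follows by reading the definition of transverse stability of $X$ restricted to $\Sigma$. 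Then $(2)\Rightarrow(3)$ is exactly Lemma~\ref{periodic orbits}.

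The core step is $(3)\Rightarrow(4)$. Once the holonomy is periodic, hence finite cyclic and linearizable, the invariant ring on $\Sigma$ is generated by two resonant monomials $x^a y^b$, $x^c y^d$, giving two generically transverse holomorphic first integrals on $\Sigma$. Using condition $(\star)$, the isolated eigenvalue $\lambda(X)$ and the fact that every orbit of $X$ meets $\Sigma$ in a finite set (because of the periodic holonomy), one saturates these two functions along the flow of $X$ to obtain two algebraically independent holomorphic functions on a full neighborhood of $0$ which are constant along the leaves of $\mathcal{F}(X)$. This is the Reeb-type stability construction carried out in \cite{CaSc2009} for our generic class.

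For the flag equivalences, $(4)\Rightarrow(6)$ is obtained by building from $F=(f_1,f_2)$ an integrable $1$-form $\omega$ annihilating $X$ whose codimension-two singular set along $S_X$ has transverse type a linear radial field $m x\,\partial_x + n y\,\partial_y$ with $m,n\in\mathbb{Z}_+$, which is a radial-type Kupka component of the flag $\mathcal{F}_\omega\supset\mathcal{F}(X)$. The implication $(6)\Rightarrow(5)$ is immediate from the definitions. For $(5)\Rightarrow(3)$, restrict $\mathcal{F}_\omega$ to $\Sigma$. The holonomy $h$ preserves $\mathcal{F}_\omega|_\Sigma$, and adaptedness means that in the resolution of the infinitesimal generator $\widehat{X}$ of $\phi:=h^{\circ k}$ the foliation $\pi^*(\mathcal{F}_\omega|_\Sigma)$ has infinitely many leaves transversal to the exceptional divisor $D$; these project to infinitely many germs of analytic curves through the origin invariant by $\phi$. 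Combined with finiteness of $\phi$-orbits, coming from closed leaves exactly as in $(1)\Rightarrow(2)$, Theorem~\ref{Brochero} shows that $\phi$ generates a finite group, hence $h$ itself is periodic.

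The hard part is $(3)\Rightarrow(4)$: periodicity of the two-dimensional holonomy gives only meromorphic-looking invariants on $\Sigma$, and one must show that their saturation along $X$ remains single-valued and holomorphic on a full neighborhood of $0\in\mathbb{C}^3$, rather than merely meromorphic or multi-valued. This is exactly where condition $(\star)$ and the tight resonance structure imposed by Proposition~1 of \cite{CaSc2009} are essential. A secondary technical obstacle is the step $(5)\Rightarrow(3)$, where one must be careful that adaptedness of the flag, and not merely presence of a dicritical component, is what produces \emph{infinitely many $\phi$-invariant analytic curves} at the origin so that Brochero Mart\'\i{}nez's theorem can be applied.
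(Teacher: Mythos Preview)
Your cyclic scheme $(4)\Rightarrow(1)\Rightarrow(2)\Rightarrow(3)\Rightarrow(4)$ and $(4)\Rightarrow(6)\Rightarrow(5)\Rightarrow(3)$ matches the paper's organization, and the steps $(1)\Rightarrow(2)$ and $(2)\Rightarrow(3)$ are handled the same way. There are, however, two substantive divergences.

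\textbf{The step $(3)\Rightarrow(4)$.} The paper does \emph{not} saturate transverse invariants along the flow. Instead it invokes the Elizarov--Ilyashenko theorem \cite{EliYa1984}: a singularity in $\operatorname{Gen}(\mathfrak{X}(\mathbb{C}^3,0))$ satisfying $(\star)$ whose holonomy along $S_X$ is linearizable is itself analytically linearizable. Once $X$ is linear with positive integer eigenvalues $m,n,-k$, Lemma~2.3 of \cite{CaSc2009} gives the holomorphic first integral explicitly. Your saturation construction, by contrast, produces functions only on a flow-saturated neighborhood of $S_X\setminus\{0\}$; you give no mechanism for extending them holomorphically across the other two invariant coordinate planes or across the origin. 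That extension is exactly what the global linearization supplies, so the omission of \cite{EliYa1984} is a genuine gap rather than a matter of taste. (Your final paragraph correctly senses that this is the hard point, but the resolution is linearization of $\mathcal{F}(X)$, not a direct Reeb-type extension.)

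\textbf{The step $(5)\Rightarrow(3)$.} Here you invoke Theorem~\ref{Brochero}, whereas the paper uses its Lemma~\ref{fin. orb. dicr. tang. id.}: after resolving $\phi=h^{\circ k}$ one has $\widetilde{\phi}|_D=\operatorname{id}$, so on each transversal leaf of the dicritical component $\widetilde{\phi}$ is a one-variable germ tangent to the identity; the Leau--Fatou flower theorem then forces $\widetilde{\phi}$ to be the identity on that leaf (finite orbits exclude parabolic dynamics), hence $\phi=\operatorname{id}$ outright. Your route via Brochero Mart\'{\i}nez is also correct, but it requires the same preliminary observation you skip: that $\phi$, being tangent to the identity, fixes each dicritical leaf \emph{individually}, not merely the foliation as a whole. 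Without this, Theorem~\ref{Brochero}'s hypothesis of infinitely many $\phi$-invariant curves is not available. Note too that after obtaining periodicity of $h$, the paper again applies \cite{EliYa1984} to linearize $\mathcal{F}(X)$ and then reads off both the first integral and the radial Kupka transverse type of $\mathcal{F}_\omega$ directly from the linear model; your construction of $\omega$ from $F=(f_1,f_2)$ is plausible but less direct.
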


\begin{proof}
[Proof of the first part of Theorem \ref{topological criterion}]It follows
immediately from the definition of transverse stability of germs of vector
fields and from (topological) stability of maps that (1) implies (2). It comes
from Lemma \ref{periodic orbits} that (2) implies (3). Now let us prove that
(3) implies (4). Since $X$ satisfies condition $(\star)$ and
$\operatorname*{Hol}(\mathcal{F}({X}),S_{X},\Sigma)$ is linearizable, then
\cite{EliYa1984} ensures that $\mathcal{F}({X})$ is linearizable. Therefore
one may suppose without loss of generality that $X(x)=\lambda x_{1}%
\frac{\partial}{\partial x_{1}}+\mu x_{2}\frac{\partial}{\partial x_{2}%
}-\kappa x_{3}\frac{\partial}{\partial x_{3}}$, where $\lambda,\mu,\kappa
\in\mathbb{R}_{+}$. Since $\operatorname*{Hol}(\mathcal{F}({X}),S_{X},\Sigma)$
is periodic one may suppose without loss of generality that $\lambda
=m,\mu=n,\kappa=k\in\mathbb{Z}_{+}$. The result then follows from Lemma~2.3 in
\cite{CaSc2009}. Finally let us verify that (4) implies (1). The existence of
a first integral for $\mathcal{F}({X})$ ensures that the leaves of
$\mathcal{F}({X})$ are closed off $\operatorname*{Sing}(\mathcal{F}({X}))$.
Furthermore, $\operatorname*{Hol}(\mathcal{F}({X}),S_{X},\Sigma)=\left\langle
H\right\rangle $ admits a couple of generically transverse $\mathcal{F}({X}%
)$-invariant holomorphic functions whose restrictions to $\Sigma$ have the
level sets preserved by $H$. Thus Proposition \ref{first int. x period.}
ensures that $\operatorname*{Hol}(\mathcal{F}({X}),S_{X},\Sigma)$ is periodic
and, in particular, topologically stable. Hence the leaves of $\mathcal{F}%
({X})$ are transversely stable with respect to $S_{X}$. This proves the first
four equivalences in Theorem~\ref{topological criterion}
\end{proof}

As a straightforward consequence, one has the following topological criterion
for the existence of invariant meromorphic functions for elements in
$\operatorname*{Gen}(\mathfrak{X}(\mathbb{C}^{3},0))$.

\begin{theorem}
\label{F-invariant merom.}Le $X\in\operatorname*{Gen}(\mathfrak{X}%
(\mathbb{C}^{3},0))$ satisfy condition $(\star)$, and $S_{X}$ be its
distinguished axis. Suppose that $\mathcal{F}({X})$ has closed leaves off
$\operatorname*{Sing}(${$\mathcal{F}({X})$}$)$ and is transversely stable with
respect to $S_{X}$, then there is an $\mathcal{F}({X})$-invariant meromorphic
function adapted to $(\mathcal{F}({X}),S_{X})$.
\end{theorem}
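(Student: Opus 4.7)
The statement is essentially a corollary of Theorem~\ref{topological criterion}: the hypothesis is exactly condition (1) of that theorem, so by the equivalences one obtains a holomorphic first integral $F=(f_{1},f_{2})\colon(\mathbb{C}^{3},0)\to(\mathbb{C}^{2},0)$ for $\mathcal{F}({X})$. My plan is to extract an adapted meromorphic invariant from $F$ by working in linearized coordinates.

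First I would apply the equivalence $(1)\Longleftrightarrow(3)\Longleftrightarrow(4)$ in Theorem~\ref{topological criterion}. The proof of $(3)\Rightarrow(4)$ already produces, via the Elizarov-Il'yashenko linearization theorem and Lemma~2.3 of \cite{CaSc2009}, a coordinate system in which
\[
X=m x_{1}\frac{\partial}{\partial x_{1}}+n x_{2}\frac{\partial}{\partial x_{2}}-k x_{3}\frac{\partial}{\partial x_{3}},\qquad m,n,k\in\mathbb{Z}_{+},
\]
with $S_{X}$ equal to the $x_{3}$-axis. A direct computation shows $X(x_{1}^{a}x_{2}^{b}x_{3}^{c})=(am+bn-ck)x_{1}^{a}x_{2}^{b}x_{3}^{c}$, so both $g_{1}:=x_{1}^{k}x_{3}^{m}$ and $g_{2}:=x_{2}^{k}x_{3}^{n}$ are $\mathcal{F}({X})$-invariant, generically transverse holomorphic germs, and in fact $(g_{1},g_{2})$ is a holomorphic first integral for $\mathcal{F}({X})$.

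Next I would define
\[
f:=\frac{g_{1}}{g_{2}}=\frac{x_{1}^{k}x_{3}^{m}}{x_{2}^{k}x_{3}^{n}},
\]
and verify the three conditions defining ``adapted to $(\mathcal{F}({X}),S_{X})$'' in turn. After cancelling the common factor $x_{3}^{\min(m,n)}$, the numerator and denominator become products of powers of $x_{1},x_{2},x_{3}$ with disjoint supports, hence are relatively prime in $\mathcal{O}_{3}$. Both still vanish on $S_{X}=\{x_{1}=x_{2}=0\}$ because each retains a positive power of $x_{1}$ (respectively $x_{2}$). Finally, on a generic transverse section $\Sigma=\{x_{3}=c\}$ with $c\neq 0$, the restriction $f|_{\Sigma}=c^{m-n}(x_{1}/x_{2})^{k}$ is pure meromorphic in the sense of Definition~\ref{def. first int.}, as required. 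Invariance of $f$ under $\mathcal{F}({X})$ is immediate from $X(g_{1})=X(g_{2})=0$ via the quotient rule $X(g_{1}/g_{2})=0$.

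There is no real obstacle here beyond verifying the three bookkeeping conditions: the crux of the work has been absorbed by Theorem~\ref{topological criterion} (which delivers linearizability and a holomorphic first integral) and by the explicit formula for invariant monomials in the linearized form. If one wished to avoid the explicit linearization, one could instead argue abstractly from any first integral $F=(f_{1},f_{2})$ by noting that $F|_{S_{X}}\equiv F(0)=0$ forces $S_{X}\subset Z(f_{1})\cap Z(f_{2})$, and then adjust by clearing common factors and taking appropriate powers to keep $S_{X}$ in the zero set of both terms; the linearized computation above is simply the cleanest realization of this idea.
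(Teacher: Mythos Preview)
Your proposal is correct and follows essentially the same route as the paper: reduce to the linearized normal form via Theorem~\ref{topological criterion} (i.e., obtain a holomorphic first integral from the stability hypothesis) and then extract an adapted meromorphic invariant, which is exactly what the paper means when it says the argument is ``almost identical to the original version (Theorem~2 in \cite{CaSc2009}), only including the stability hypothesis.'' Your explicit monomial construction $f=x_{1}^{k}x_{3}^{m}/x_{2}^{k}x_{3}^{n}$ is simply a concrete realization of the step the paper defers to \cite{CaSc2009}.
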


\begin{proof}
The proof of Theorem \ref{F-invariant merom.} is now almost identical to the
original version (Theorem~ 2 in \cite{CaSc2009}), only including the stability
hypothesis in the proof to obtain then a holomorphic first integral and after
the desired $\mathcal{F}({X})$-invariant meromorphic function adapted to
$(\mathcal{F}({X}),S_{X})$.
\end{proof}


Now we study the topological invariance of the existence of a holomorphic
first integral for a generic germ of a holomorphic vector field, as a
consequence of our preceding results. We recall that two germs of holomorphic
vector fields $X$ and $Y$ at the origin $0\in\mathbb{C}^{n}$ are
\textit{topologically equivalent} if there is a homeomorphism $\psi\colon
U\rightarrow V$ where $U,V$ are neighborhoods of the origin $0\in
\mathbb{C}^{n}$, where $X$ and $Y$ have representatives $X_{U}$ and $Y_{V}$
respectively, such that $\psi$ takes orbits of $X_{U}$ into orbits of $Y_{V}$.
Such a map $\psi$ takes separatrices of $X_{U}$ into separatrices of $Y_{V}$:
indeed, a separatrix of $X_{U}$ is an orbit which is closed off the origin,
and the same holds for its image under $\psi$. Assume that the vector field
$X$ is generic satisfying condition $(\star)$ and admits a holomorphic first
integral. In this case one has:

\begin{claim}
\label{claim:separatrix} The vector field $X$ is analytically linearizable,
say $X(x,y,z)=X_{n,m,-k}:=nx\frac{\partial}{\partial x}+my\frac{\partial
}{\partial y}-k\frac{\partial}{\partial z}$ with $n,m,k\in\mathbb{Z}_{+}$ and
suitable local coordinates $(x,y,z)\in(\mathbb{C}^{3},0)$. In particular, $X$
admits a unique separatrix off the dicritical plane, and this separatrix
corresponds to the distinguished separatrix $S_{X}$.
\end{claim}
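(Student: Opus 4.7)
My plan is to reduce the claim to an explicit linear normal form using the equivalences already proved in Theorem~\ref{topological criterion}, and then classify the separatrices of that model by direct inspection of its flow.

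First, I would carry out the linearization. Since $X$ admits a holomorphic first integral, condition~(3) of Theorem~\ref{topological criterion} yields that $\operatorname{Hol}(\mathcal{F}({X}),S_{X},\Sigma)$ is periodic, hence finite and linearizable. Exactly as in the proof of (3)$\Rightarrow$(4), Elizarov-Yakovenko's theorem~\cite{EliYa1984} combined with condition~$(\star)$ then delivers an analytic linearization of $\mathcal{F}({X})$, while periodicity of the holonomy forces $\mathbb{Q}$-commensurability of the eigenvalues. Passing to a tangent vector field (multiplying by a nonvanishing holomorphic factor) and relabelling coordinates so that the isolated eigenvalue picked out by condition~$(\star)$ is the third one, I may therefore write
\[
X = X_{n,m,-k} = nx\frac{\partial}{\partial x} + my\frac{\partial}{\partial y} - kz\frac{\partial}{\partial z}, \qquad n,m,k\in\mathbb{Z}_{+},
\]
with the distinguished axis $S_{X}$ being the $z$-axis.

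Next, I would classify every separatrix of $X_{n,m,-k}$. A separatrix is an invariant analytic curve through the origin, i.e., the closure of a leaf accumulating at $0$. The flow of $X_{n,m,-k}$ is $(x_{0}e^{nt},\,y_{0}e^{mt},\,z_{0}e^{-kt})$; since $n,m>0>-k$, an orbit can accumulate at the origin as $t\to+\infty$ only if $x_{0}=y_{0}=0$, and as $t\to-\infty$ only if $z_{0}=0$. Therefore every separatrix is contained in the $z$-axis or in the invariant plane $\{z=0\}$. The restriction $\left.X_{n,m,-k}\right|_{\{z=0\}} = nx\partial_{x}+my\partial_{y}$ admits the first integral $x^{m}/y^{n}$, producing a one-parameter family of invariant curves through the origin, so $\{z=0\}$ is precisely the dicritical plane. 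Hence off this plane the only remaining separatrix is the $z$-axis, namely $S_{X}$, proving the claim.

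The only non-elementary input is the analytic linearization step, which has already been arranged inside the proof of Theorem~\ref{topological criterion}; once the linear model is in hand, the separatrix count is an immediate check on the explicit flow, so I do not anticipate any further obstacle.
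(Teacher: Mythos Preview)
Your proposal is correct and follows essentially the same approach as the paper: linearize via Theorem~\ref{topological criterion}(3) and \cite{EliYa1984}, then read off the separatrices from the explicit linear model. The only cosmetic difference is that the paper argues non-accumulation via the invariant $x^{k}z^{n}=a^{k}c^{n}$ rather than by tracking $\Re(t)\to\pm\infty$ (your ``$t\to\pm\infty$'' should be read this way, since $t\in\mathbb{C}$).
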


\begin{proof}
Indeed, the analytic linearization of $X$ is a straightforward consequence of
the first part of Theorem~\ref{topological criterion} (or, since by hypothesis
there is a holomorphic first integral, in view of
Proposition~\ref{periodic orbits} and \cite{EliYa1984}). In this normal form
\[
X(x,y,z)=X_{n,m,-k}:=nx\frac{\partial}{\partial x}+my\frac{\partial}{\partial
y}-k\frac{\partial}{\partial z}%
\]
the \textquotedblleft dicritical plane" is the plane $\{z=0\}$ and the
distinguished separatrix is the $z$-axis. The orbit $\mathcal{O}_{(a,b,c)}$ of
$X$ through the point $(a,b,c)$ is given by
\[
\phi(t)=(x(t),y(t),z(t))=(ae^{nt},bye^{mt},ce^{-kt}),t\in\mathbb{C}.
\]

Thus, if $\mathcal{O}_{(a,b,c)}$ accumulates at the origin, either $c=0$ or
$c\neq0$ and $a=b=0$. For instance, if $c\neq0\neq a$, then the orbit is
contained in the hypersurface $x^{k}z^{n}=a^{k}c^{n}\neq0$, which does not
accumulate at the origin.
\end{proof}

\begin{lemma}
\label{lemma:topseparatrix} A topological equivalence takes the distinguished
axis of $X$ into the distinguished axis of $Y$.
\end{lemma}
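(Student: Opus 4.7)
The plan is to give a topological characterization of $S_X$, intrinsic to the orbits of $X$, that singles it out among all separatrices, so that the equivalence $\psi$ is forced to send it to the analogous object for $Y$.

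First, using Claim~\ref{claim:separatrix}, I would fix linearizing coordinates in which $X = X_{n,m,-k}$, so that $S_X = \{x=y=0\}$ and the dicritical plane is $\Pi_X := \{z=0\}$. With the explicit orbits $\phi_t(a,b,c) = (ae^{nt}, be^{mt}, ce^{-kt})$, $t\in\mathbb{C}$, a direct computation yields
\[
E_X := \bigl\{\, p \in U : 0 \in \overline{\mathcal{O}_X(p)} \,\bigr\} \,=\, \Pi_X \cup S_X.
\]
This set is orbit-invariant, so $\psi(E_X) = E_Y$ with $E_Y$ defined analogously. Moreover, the local topological dimension of $E_X$ at a smooth point is $1$ on $S_X \setminus \{0\}$ (around each such $p$ the coordinate $z$ is bounded away from $0$ on a small enough neighborhood $W$, and the computation above forces $W\cap E_X = W \cap S_X$) and $2$ on $\Pi_X \setminus S_X$ (by the symmetric computation). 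Since local topological dimension is a homeomorphism invariant, $\psi$ must send $S_X \setminus \{0\}$ into the locally one-dimensional part of $E_Y$.

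To conclude, I would identify this locally one-dimensional part of $E_Y$ with $S_Y \setminus \{0\}$. Condition $(\star)$ makes $0$ a hyperbolic singularity of $Y$ (in the real sense), so the holomorphic stable/unstable manifold theorem produces a $1$-complex-dimensional local stable manifold---necessarily $S_Y$, by uniqueness of the invariant curve tangent to the isolated eigendirection---and a $2$-complex-dimensional local unstable manifold, necessarily the dicritical plane $\Pi_Y$. Any orbit accumulating at $0$ must, sufficiently close to $0$, lie in one of these two invariant manifolds, so $E_Y = \Pi_Y \cup S_Y$ near $0$; the same local-dimension dichotomy identifies the locally one-dimensional stratum of $E_Y$ with $S_Y \setminus \{0\}$. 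Together with $\psi(0) = 0$ this yields $\psi(S_X) = S_Y$.

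The main obstacle is this last identification: for a possibly non-linearizable holomorphic singularity of $Y$, one must show that the set of points whose complex orbit accumulates at $0$ coincides, near $0$, with the union of the stable and unstable invariant manifolds. The real-flow analogue is Hartman--Grobman; for complex orbits the extra content is that each orbit is $2$-real-dimensional, and one needs cone/trapping estimates along the invariant manifolds (or, in the $(\star)$ regime, the topological linearization available from \cite{EliYa1984}) to rule out orbits that cluster at $0$ from directions off the stable/unstable decomposition.
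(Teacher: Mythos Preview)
Your argument and the paper's are built on the same idea: the distinguished axis is the unique separatrix that is \emph{isolated} in the set of all separatrix points, whereas every separatrix in the dicritical plane is surrounded by infinitely many others. The paper runs this by contradiction via $\psi^{-1}$, using the criterion ``every invariant neighborhood of a dicritical separatrix of $X$ meets infinitely many separatrices, while this fails for $S_Y$''; you run it directly, packaging the same dichotomy as the local topological dimension of the separatrix set $E$. The two criteria are equivalent, so there is no substantive difference in strategy.

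The obstacle you honestly flag---that for the (possibly non-linearizable) $Y$ one still needs $E_Y$ to coincide, near $S_Y\setminus\{0\}$, with $S_Y$ itself---is exactly the point the paper glosses over when it asserts that ``this same phenomenon does not occur for arbitrarily small invariant neighborhoods of a leaf contained in $S_Y$''. Your proposed remedies are on target: condition~$(\star)$ makes the singularity of $Y$ real-hyperbolic, so the holomorphic stable/unstable manifold theorem (together with the fact that $Y\in\operatorname{Gen}$ already preserves the coordinate hyperplanes, forcing $\Pi_Y$ and $S_Y$ to be these invariant manifolds) gives $E_Y=\Pi_Y\cup S_Y$ near $0$. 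One minor point: the reference \cite{EliYa1984} concerns \emph{analytic} orbital classification and is not the right citation for a topological linearization; the hyperbolic invariant-manifold argument (or \cite{CaKuiPa1978}) is the cleaner route.
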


\begin{proof}
Indeed, as we have seen above, the image $\psi(S_{X})$ is some separatrix of
$Y$. If this is not the distinguished axis of $Y$, then the distinguished axis
of $Y$ is taken by $\psi^{-1}$ into a separatrix other than the distinguished
axis of $X$. Therefore, according to Claim~\ref{claim:separatrix}, $\psi
^{-1}(S_{Y})$ must be a separatrix of the \textquotedblleft
dicritical\textquotedblright\ part of $X$, i.e., in the coordinates $(x,y,z)$
above, where $X(x,y,z)=X_{n,m,-k}$, we have $\psi^{-1}(S_{Y})\subset\{z=0\}$.
Nevertheless, any invariant neighborhood of a leaf contained in a dicritical
separatrix of $X$ off the origin intersects infinitely many separatrices
(namely, those contained in the intersection of this neighborhood with the
dicritical plane $\{z=0\}$). On the other hand, this same phenomena does not
occur for arbitrarily small invariant neighborhoods of a leaf contained in the
distinguished axis $S_{Y}$ of $Y$. Therefore, necessarily $\psi(S_{X})$ is the
distinguished axes of $Y$.
\end{proof}

From the above considerations we  immediately obtain Corollary~\ref{corollary:topological criterion} from Theorem~\ref{topological criterion}.

\section{Flags and integrability}

In this section, the second part of the of the main result is proved, i.e.,
the equivalence of the first four with the final two equivalences in
Theorem~\ref{topological criterion}.

In fact, our main concern here is the following: given a germ of a foliation by
curves $\mathcal{F}$ induced by a germ of a vector field of the form%
\begin{equation}
X=mx(1+a(x,y,z))\frac{\partial}{\partial x}+ny(1+b(x,y,z))\frac{\partial
}{\partial x}-kz(1+c(x,y,z)\frac{\partial}{\partial z} \label{eq4}%
\end{equation}
with $a,b,c\in\mathcal{M}_{3}$, study the consequences of the existence of a
codimension $1$ germ of a holomorphic foliation tangent to $X$, which is
transversely dicritical with respect to $S$.

\subsection{Transverse structure}

We begin by studying the consequences of the existence of a flag foliation
with a dicritical transverse type for a vector filed $X\in\operatorname{Gen}%
(\mathfrak{X}(\mathbb{C}^{3},0))$.

\begin{lemma}
\label{transversal dicrit.}Let $\mathcal{F}$ be a germ of a foliation by
curves on $(\mathbb{C}^{3},0)$, $S$ an invariant curve of $\mathcal{F}$
through the origin and $\mathcal{G}$ a codimension one foliation satisfying
the following conditions:

\begin{description}
\item[(i)] $\mathcal{G}$ is tangent to $X$;

\item[(ii)] There is a section $\Sigma$ transverse to $S$ such that $\left.
\mathcal{G}\right\vert _{\Sigma}$ is dicritical.
\end{description}

Then $\mathcal{G}$ is transversely dicritical with respect to $S.$
\end{lemma}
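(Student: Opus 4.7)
The plan is to use the flow of $X$ to transport the germ $(\mathcal{G}|_{\Sigma}, q_{\Sigma})$ along $S$, thereby propagating the assumed dicriticity at one transverse section to every transverse section at every point of $S$. Transverse dicriticity of $\mathcal{G}$ with respect to $S$ is then precisely the statement that this transverse germ is a well-defined (up to biholomorphism) dicritical foliation germ in $(\mathbb{C}^{2},0)$, independent of the choice of section and base point.

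First I would translate the flag condition (i) into $X$-invariance of $\mathcal{G}$: writing $\mathcal{G} = \mathcal{F}_{\omega}$ for an integrable holomorphic 1-form $\omega$, the tangency $i_{X}\omega \equiv 0$ combined with integrability $\omega \wedge d\omega = 0$ gives, via Cartan's formula,
\[
\mathcal{L}_{X}\omega \wedge \omega = (i_{X}d\omega)\wedge \omega = i_{X}(d\omega \wedge \omega) = 0,
\]
so $\mathcal{L}_{X}\omega = f\omega$ for a holomorphic function $f$ near any regular point of $X$. Integrating this linear ODE, the local flow $\phi_{t}$ of $X$ pulls $\omega$ back to a nonvanishing multiple of itself, and therefore preserves $\mathcal{G}$. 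Next, given two transverse sections $\Sigma, \Sigma'$ to $S$ at points $p, p' \in S \setminus \{0\}$, I would connect them by a path $\gamma \subset S \setminus \{0\}$ and form the germ of holonomy $h_{\gamma} \colon (\Sigma,p) \to (\Sigma',p')$ of $\mathcal{F}(X)$ along $\gamma$, obtained by concatenating pieces of $\phi_{t}$. By the preceding $X$-invariance, $h_{\gamma}$ is a biholomorphism conjugating $(\mathcal{G}|_{\Sigma}, p)$ to $(\mathcal{G}|_{\Sigma'}, p')$ as germs of planar foliations.

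To finish I would invoke the biholomorphic invariance of dicriticity, which is intrinsic to the germ since it is defined through the resolution process (equivalently, through the presence of infinitely many separatrices adhering only at the origin). Combined with the previous step, this forces $\mathcal{G}|_{\Sigma'}$ to be dicritical at $q_{\Sigma'}$ for every transverse section $\Sigma'$ at every point of $S \setminus \{0\}$, which is exactly the transverse dicriticity asserted. The main obstacle I expect is the passage to the origin itself: $0 \in S$ is a singularity of $X$, so holonomy is only defined along $S \setminus \{0\}$. To extract a transverse type at $0$, I would take $\Sigma'$ arbitrarily close to $0$ along $S$ and use that dicriticity is a finite-determinacy property, so that the germs $(\mathcal{G}|_{\Sigma'}, q_{\Sigma'})$ stabilize to a well-defined dicritical transverse type of $\mathcal{G}$ at the origin along $S$.
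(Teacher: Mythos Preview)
Your proof is correct and follows essentially the same approach as the paper's: both transport the transverse germ of $\mathcal{G}$ along $S$ via the holonomy of $\mathcal{F}(X)$, using that $\mathcal{G}$ is invariant under the flow of $X$ (the paper argues this directly from the inclusion of $X$-orbits in leaves of $\mathcal{G}$, while you obtain it from $\mathcal{L}_{X}\omega\wedge\omega=0$ via Cartan's formula). Your concern about the origin is unnecessary here, since ``transversely dicritical with respect to $S$'' refers only to sections at points of $S\setminus\{0\}$, which is exactly what the holonomy argument yields and all the paper claims.
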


\begin{proof}
Since the orbits of $X$ are contained in the leaves of $\mathcal{G}$, then
they are invariant by the flow of $X$. Therefore if $\Sigma^{\prime}$ is
another section transversal to $S$ and $\phi:\Sigma\longrightarrow
\Sigma^{\prime}$ is an element of the holonomy pseudogroup of $X$ with respect
to $S$, then it is a diffeomorphism taking the leaves of $\left.
\mathcal{G}\right\vert _{\Sigma}$ onto the leaves of $\left.  \mathcal{G}%
\right\vert _{\Sigma^{\prime}}$.
\end{proof}

The following result leads to deep implications in the transverse dynamics of
the foliation $\mathcal{F}(X)$ in the presence of a dicritical flag
$\mathcal{F}(X)\subset\mathcal{F}_{\omega}$. First recall some notation. Let
$\operatorname*{Diff}_{\operatorname*{id}}(\mathbb{C}^{2},0)\subset
\operatorname*{Diff}(\mathbb{C}^{2},0)$ denote the group of germs of
diffeomorphism tangent to the identity. Further, let $\mathcal{F}_{\omega}$ be
a germ of a foliation in $(\mathbb{C}^{2},0)$ given by $\omega=0$, then we
denote by $\operatorname*{Aut}(\mathcal{F}_{\omega})$ the subgroup of
$\operatorname*{Diff}(\mathbb{C}^{2},0)$ given by those $\phi\in
\operatorname*{Diff}(\mathbb{C}^{2},0)$ preserving $\mathcal{F}_{\omega}$,
i.e., such that $\phi^{\ast}\omega\wedge\omega=0$.

\begin{lemma}
\label{fin. orb. dicr. tang. id.}Let $\mathcal{F}_{\omega}$ be a germ of
a foliation in $(\mathbb{C}^{2},0)$ having a dicritical component and adapted to
$\phi\in\operatorname*{Diff}_{\operatorname*{id}}(\mathbb{C}^{2},0)$. Thus
$\phi\in\operatorname*{Aut}(\mathcal{F}_{\omega})$ and has finite orbits if
and only if $\phi$ is the identity.
\end{lemma}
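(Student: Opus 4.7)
The plan is to prove the non-trivial direction by contradiction. The direction $\phi=\operatorname*{id}\Rightarrow$ (rest) is immediate. For the converse, assume $\phi\in\operatorname*{Aut}(\mathcal{F}_{\omega})$ has finite orbits and suppose for contradiction that $\phi\neq\operatorname*{id}$, equivalently $\widehat{X}\neq 0$. A periodic germ tangent to the identity is the identity, so it suffices to show $\phi$ is periodic; for this I would exhibit infinitely many analytic curves at $0$ invariant under some iterate $\phi^{N}$ and invoke Brochero's Theorem~\ref{Brochero} applied to $\phi^{N}$.

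By the adapted hypothesis there is a resolution $\pi\colon(\mathcal{M},D)\to(\mathbb{C}^{2},0)$ of $\widehat{X}$ such that $\pi^{\ast}\mathcal{F}_{\omega}$ has infinitely many leaves transverse to $D$. Since $D$ has only finitely many irreducible components, some $D_{i}\cong\mathbb{P}^{1}$ is met transversely along an open set $U\subset D_{i}$; the leaves $L_{p}$ of $\pi^{\ast}\mathcal{F}_{\omega}$ near $D_{i}$ are locally parametrized by $p\in D_{i}$, and their projections $C_{p}=\pi(L_{p})$ are distinct separatrices of $\mathcal{F}_{\omega}$ through the origin. Lifting $\phi$ gives a biholomorphism $\widetilde{\phi}$ near $D$ which preserves $\pi^{\ast}\mathcal{F}_{\omega}$. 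Moreover, since the resolution blows up only at fixed points of the successive lifts---starting from $0\in\operatorname*{Fix}(\phi)$, with $\phi$ tangent to the identity forcing the first lift to fix the first exceptional divisor pointwise, so that the subsequent blowup centres remain in this pointwise-fixed locus---every irreducible component of $D$ is preserved setwise by $\widetilde{\phi}$, and $\widetilde{\phi}|_{D_{i}}$ is a well-defined M\"obius automorphism of $\mathbb{P}^{1}$.

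The heart of the argument is to show $\widetilde{\phi}|_{D_{i}}$ has finite order. For $p\in U$ and $q\in L_{p}\setminus D$ with $\pi(q)$ close to $0$, the forward iterates $\widetilde{\phi}^{n}(q)$ lie on the leaves $L_{\widetilde{\phi}|_{D_{i}}^{n}(p)}$; an infinite $\widetilde{\phi}|_{D_{i}}$-orbit of $p$ would force these leaves to be pairwise distinct, so the iterates $\phi^{n}(\pi(q))$ would be pairwise distinct points of $\mathbb{C}^{2}$. The tangent-to-the-identity behaviour of $\phi$ near $0$ guarantees that infinitely many of these iterates remain in any fixed neighbourhood of $0$ (a Leau-flower-style analysis adapted to the dicritical direction), contradicting the finite-orbit hypothesis. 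Hence the $\widetilde{\phi}|_{D_{i}}$-orbit of $p$ is finite; since $\widetilde{\phi}|_{D_{i}}$ is M\"obius, it must be of finite order $N$, so $\widetilde{\phi}^{N}|_{D_{i}}=\operatorname*{id}$.

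Consequently $\widetilde{\phi}^{N}$ fixes $D_{i}$ pointwise, every leaf $L_{p}$ for $p\in U$ is $\widetilde{\phi}^{N}$-invariant, and $C_{p}=\pi(L_{p})$ is a $\phi^{N}$-invariant analytic curve through $0$; infinitely many such curves are thereby produced. Since $\phi^{N}$ inherits the finite-orbit property, Brochero's Theorem~\ref{Brochero} forces $\phi^{N}$ to generate a finite group, so $\phi$ is periodic and hence equals the identity, contradicting $\phi\neq\operatorname*{id}$. The main obstacle is the dynamical step extracting the finite order of $\widetilde{\phi}|_{D_{i}}$ from the finite-orbit hypothesis: it requires combining the tangent-to-identity behaviour of $\phi$ with the dicritical transverse geometry of the adapted foliation, in order to force infinitely many iterates of a point close to $0$ to remain in any fixed neighbourhood.
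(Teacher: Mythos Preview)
Your route is more circuitous than the paper's and the step you yourself flag as ``the main obstacle'' is a genuine gap. You only assert that $\widetilde{\phi}$ fixes the \emph{first} exceptional divisor pointwise and then retreat to ``setwise'' for the later ones, which is what forces you into the M\"obius finite-order argument. That argument is not completed: from an infinite $\widetilde{\phi}|_{D_i}$-orbit of $p\in U$ you want to deduce that infinitely many iterates $\phi^{n}(\pi(q))$ remain in a fixed neighbourhood of $0$, but the $\widetilde{\phi}|_{D_i}$-orbit of $p$ may accumulate on a boundary or corner point of $D_i$, and near such points nothing prevents the $\widetilde{\phi}$-iterates of $q$ from drifting away from $D$; your ``Leau-flower-style analysis adapted to the dicritical direction'' is exactly the missing content. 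The subsequent appeal to Brochero's theorem would then be a second heavy tool stacked on top of an unresolved step.

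The paper sidesteps all of this with one observation: for the resolution $\pi$ of the infinitesimal generator $\widehat{X}$ (the Abate resolution of $\phi$), the lift satisfies $\widetilde{\phi}|_{D}=\operatorname*{id}$ pointwise on \emph{all} of $D$, not just the first component. Hence each dicritical leaf $L_t$ (transverse to $D_j$ at $t\in U_j$) is already $\widetilde{\phi}$-invariant, and the restriction $\widetilde{\phi}_t:=\widetilde{\phi}|_{L_t}$ is a one-variable germ in $\operatorname*{Diff}(\mathbb{C},0)$ tangent to the identity. Now the classical one-dimensional Leau--Fatou flower theorem applies directly: either $\widetilde{\phi}_t=\operatorname*{id}$, or it has attracting petals and hence points with infinite orbit arbitrarily close to $0$ on that leaf, contradicting the finite-orbit hypothesis downstairs. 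Thus $\widetilde{\phi}_t=\operatorname*{id}$ for every $t\in U_j$, and analytic continuation gives $\phi=\operatorname*{id}$. No M\"obius step, no Brochero, no two-dimensional petal analysis is needed. The fix for your argument is therefore not to strengthen the dynamical step but to strengthen the claim about $\widetilde{\phi}|_D$; once you have it pointwise, the rest collapses to one-dimensional Leau--Fatou.
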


\begin{proof}
Let $\pi:(\mathcal{M},D)\rightarrow(\mathbb{C}^{2},0)$ be the resolution of
$\phi$ introduced in \cite{Abate2001}, $\widetilde{\mathcal{F}}:=\pi^{\ast
}\mathcal{F}$ the strict transform of $\mathcal{F}$ via $\pi$, and
$\widetilde{\phi}$ the lifting of $\phi$. Since $\phi\in\operatorname*{Diff}%
_{\operatorname*{id}}(\mathbb{C}^{2},0)$, then $\widetilde{\left.
\phi\right\vert }_{D}=\left.  \operatorname*{id}\right\vert _{D}$. If
$\widetilde{\mathcal{F}}_{j}\subset\widetilde{\mathcal{F}}$ is a dicritical
component of $\widetilde{\mathcal{F}}$ defined in a neighborhood of the
irreducible component $D_{j}\subset D$, then it is given in appropriate
coordinate systems by a fibration transversal to $D_{j}$, up to a finite
number of singular leaves or smooths leaves tangent to $D_{j}$. More
precisely, there is an open set $U_{j}:=D_{j}\backslash\{p_{1},\cdots,p_{r}\}$
such that $\widetilde{\left.  \phi\right\vert }_{U_{j}}$ can be seen as a
familly of germs of automorphisms of $(\mathbb{C},0)$ with parameters in
$U_{j}\subset D_{j}\simeq\mathbb{CP}^{1}$ (see Figure $1$). Let $\widetilde
{\phi}_{t}\in\operatorname*{Diff}(\mathbb{C},0)$ be given by $\widetilde{\phi
}_{t}(x):=\widetilde{\phi}(t,x)$ for some $t\in U_{j}$, then the classical
Leau-Fatou flower theorem says that $\widetilde{\phi}_{t}$ has a parabolic
fixed point at the origin, unless it is the identity. The result then follows
by analytic continuation.
\end{proof}

%

\begin{center}
\includegraphics[
height=0.7939in,
width=2.5365in
]%
{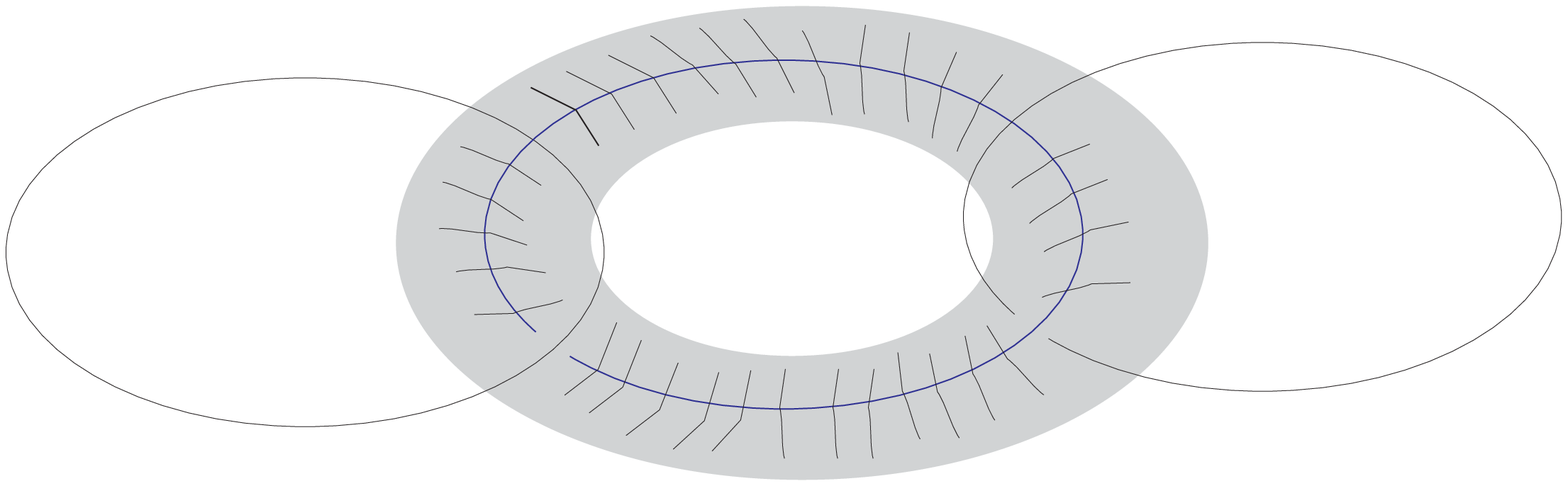}%
\\
Figure 1. A dicritical component of $\widetilde{\mathcal{F}}$.
\end{center}

\subsection{The existence of an algebraic-topological criterion}

Here shall finish the proof of Theorem \ref{topological criterion}. For this
sake, let us first recall some facts proved along this work and introduce some
terminology. First notice that any $X\in\operatorname*{Gen}(\mathfrak{X}%
(\mathbb{C}^{3},0))$ admitting a holomorphic first integral must satisfy
condition $(\star)$ in Definition~\ref{definition:conditionstar} (cf. \cite{CaSc2009}).
Assume the curve $S_{X}$ is the $z$-axis, let $\Sigma_{z}%
:=(z=\operatorname*{const}.)$ be a section transverse to $S_{X}$, and
$\operatorname*{Hol}(\mathcal{F}(X),S_{X},\Sigma_{z})$ be the holonomy of
$\mathcal{F}(X)$ with respect to $S_{X}$ evaluated at $\Sigma_{z}$.

\begin{proof}
[End of the proof of Theorem \ref{topological criterion}]First suppose all the
leaves of $\mathcal{F}(X)$ are closed off $\operatorname*{Sing}({\mathcal{F}%
(X)})=\{0\}\subset\mathbb{C}^{3}$ and there is an adapted flag $\mathcal{F}%
({X})\subset\mathcal{F}_{\omega}$. Given a leaf $L$ of $\mathcal{F}(X)$ it
follows that the closure $\overline{L}\subset L\cup\operatorname*{Sing}%
(\mathcal{F}(X))$ is an analytic subset of pure dimension one (\cite{GuRo}) in
$\mathbb{C}^{3}$. Since this leaf is transversal to $\Sigma_{z}$, one
concludes that $\overline{L}\cap\Sigma_{z}$ is a finite set. On the other
hand, given a point $x\in L\cap\Sigma_{z}$, its orbit in the holonomy group is
also contained in $L\cap\Sigma_{z}$, so that it is a finite set. Thus the
orbits of $H_{z}$ are finite. By hypothesis, for any $z_{0}\in S(X)$ the
foliation $\left.  \mathcal{F}_{\omega}\right\vert _{\Sigma_{z_{0}}}$ has a
dicritical component. Now consider a simple loop $\gamma$ around the origin,
inside the $z$-axis, starting from $z_{0}$. Pick a leaf $L$ of $\left.
\mathcal{F}_{\omega}\right\vert _{\Sigma_{z_{0}}}$ and consider the liftings
of $\gamma$ starting at points of $L$, along the trajectories of
$\mathcal{F}(X)$. Then these liftings form a three dimensional real variety,
say $S_{L}$, whose intersection with $\Sigma_{z_{0}}$ is given by $L$ and
$L^{\prime}$ (see Figure $2$). In particular if $h:=h_{\gamma}$ is the
generator of $\operatorname*{Hol}{}(\mathcal{F}(X),S_{X},\Sigma_{z})$, then
$L^{\prime}=h(L)$. For the one-form $\omega$, one has that $S_{L}$ is tangent
to $\operatorname*{Ker}(\omega)$, and $S_{L}\cap$ $\Sigma_{z_{0}}$ is tangent
to the induced foliation $\left.  \mathcal{F}_{\omega}\right\vert
_{\Sigma_{z_{0}}}$. Thus $L^{\prime}$ is a leaf of $\left.  \mathcal{F}%
_{\omega}\right\vert _{\Sigma_{z_{0}}}$. Since $\left.  \mathcal{F}_{\omega
}\right\vert _{\Sigma_{z_{0}}}$ has a dicritical component and $h$ is a
difeomorphism with resonant linear part having finite orbits, then Lemma
\ref{fin. orb. dicr. tang. id.} ensures that $h$ is periodic (in particular
linearizable and finite). Since $\mathcal{F}(X)\in\operatorname*{Gen}%
(\mathfrak{X}(\mathbb{C}^{3},0))$ has linearizable periodic holonomy, then it
follows from \cite{EliYa1984} that the foliation $\mathcal{F}(X)$ is also
analytically linearizable. Therefore, one may suppose without loss of
generality that $X(x,y,z)=mx\frac{\partial}{\partial x}+ny\frac{\partial
}{\partial y}-kz\frac{\partial}{\partial z}$. This vector field has a
holomorphic first integral. From the above linearization, it is easy to see
that the flag foliation $\mathcal{F}_{\omega}$ containing $\mathcal{F}(X)$
must have a linear dicritical Kupka transverse type along the $z$-axis. In
particular, $\mathcal{F}_{\omega}$ is of radial type. This proves that (5)
implies (1)-(4) and also (6). Since the converse is immediate, this proves
that the first four conditions in Theorem~\ref{topological criterion} are
equivalent to conditions (5) and (6).
\end{proof}

%

\begin{center}
\includegraphics[
height=1.9406in,
width=3.039in
]%
{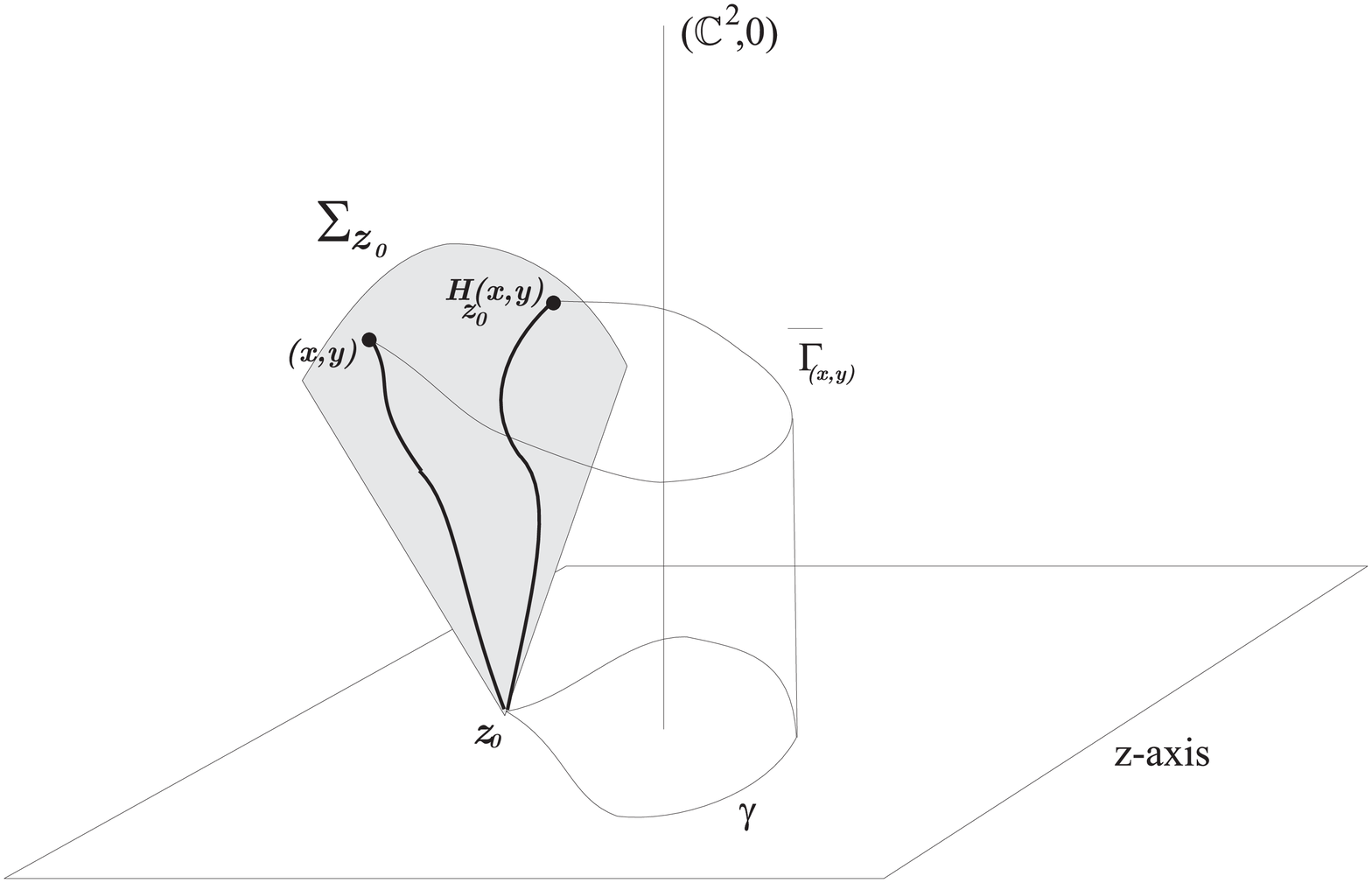
}%
\\
Figure 2. The lifting of $\gamma$ along the leaves of $\mathcal{F}$ starting
at points of $L$.
\end{center}

\vglue.3in

{\small
\begin{remark}[Parabolic curves and smooth sets of fixed points cf. \cite{Abate2001}]
{\rm  In \cite{CaSc2009} it is stated an integrability result, mentioning only the fact that the leaves of $\mathcal{F}(X)$ are closed off $\operatorname*{Sing}%
(\mathcal{F}(X))$.
Nevertheless, as we shall see in the next sections, this result is not correct.
Indeed, there are such vector fields without holomorphic first integral (cf.
Example~\ref{main example}).

  Let us identify precisely the missing point in
\cite{CaSc2009} and to determine some further topological conditions in order
to correct the statements of the main theorems therein (Theorems 1.2 and 1.3
in \cite{CaSc2009}).
Along these lines, we shall keep all the notations introduced in
\cite{CaSc2009}. First, let us deal with the missing point in \cite{CaSc2009}.
In Theorem 3.6 of \cite{CaSc2009}, we have stated that every non trivial
complex map germ fixing the origin admits a parabolic curve. Javier Ribon draw
our attention to the fact that this is not true with the following example:

{\rm Let $X^{o}=py\frac{\partial}{\partial y}-qx\frac{\partial}{\partial
x}$ with $p,q\in\mathbb{Z}_{+}$ and $X=xyX^{o}$, then the orbits of the map
$\Phi(x,y)=\exp[1]X(x,y)$ are confined in the level sets of the first integral
$f(x,y)=x^{p}y^{q}$ to the vector filed $X$. Therefore $\Phi$ has no orbit
attracting to the origin, and thus does not admit any parabolic curve at the
origin.}

Some time after that, Marco Abate communicated us the same fact showing that
Theorem 3.6 in \cite{CaSc2009} contradicts Proposition 2.1, p. 185, in
\cite{Abate2001}. As a matter of fact, Lemma 3.5 (and thus Theorem 3.6) is not
correct. This is due to the authors misinterpretation of the proof of
Corollary 3.1 in \cite{Abate2001} wrongly stated as Theorem 3.2 in
\cite{CaSc2009}. Indeed, the correct statement  is the following:
{\it Let $G\in\operatorname*{Diff}%
_{1}(\mathbb{C}^{2},0)$ and suppose that $S:=\operatorname*{Fix}(G)$ is a
smooth curve through the origin such that $\operatorname*{ind}_{0}%
(G,S)\notin\mathbb{Q}^{+}$. Then $G$ admits $\nu(f)-1$ parabolic curves.}

More precisely, one can check that this would be the appropriate hypothesis
looking to the proof of Theorem 3.1 in \cite{Abate2001}. Now one can
check that the diffeomorphism in the proof of Lemma 3.5 in \cite{CaSc2009}
does not satisfy the conditions of the above theorem.
}

\end{remark}
}

\noindent\textbf{Acknowledgement}. {\rm The authors would like to thank  Javier Ribon, Marco Abate, Fabio Brochero Mart\'inez and Alcides Lins
Neto for helpful suggestions and comments.}

\vglue.1in

\begin{tabular}
[c]{ll}%
Leonardo Câmara & \quad Bruno Scárdua\\
Departamento de Matemática - CCE & \quad Instituto de Matemática\\
Universidade Federal do Espírito Santo & \quad Universidade Federal do Rio de
Janeiro\\
Av. Fernando Ferrari 514 & \quad Caixa Postal 68530\\
29075-910 - Vitória - ES & \quad21.945-970 Rio de Janeiro-RJ\\
BRAZIL & \quad BRAZIL\\
leonardo.camara@ufes.br & \quad scardua@im.ufrj.br
\end{tabular}

\end{document}